\renewcommand{\fnum@algorithm}{\fname@algorithm}
\numberwithin{equation}{section}
\newtheorem{Remark}{Remark}[section]
\newtheorem{Theorem}{Theorem}[section]
\newtheorem{Lemma}{Lemma}[section]
\newtheorem{Proposition}{Proposition}[section]
\newtheorem{Corollary}{Corollary}[section]
\newcommand{\be}{\begin{equation}}
	\newcommand{\ee}{\end{equation}}
\newcommand{\bee}{\begin{equation*}}
	\newcommand{\eee}{\end{equation*}}
\newcommand{\bi}{\begin{itemize}}
	\newcommand{\ei}{\end{itemize}}
\DeclareMathOperator{\tr}{tr}
\DeclareMathOperator*{\argmax}{arg\,max}
\def \E{\mathbb{E}}
\def \N{\mathbb{N}}
\def \R{\mathbb{R}}
\def \Cc{{\mathcal C}}
\def \Ec{{\mathcal E}}
\def \Pc{{\mathcal P}}
\def \eps{\varepsilon}
\def \Hc{\mathcal{H}}
\def \E{\mathbb{E}}
\def \N{\mathbb{N}}
\def \I{\mathbb{I}}
\def \R{\mathbb{R}}
\def \Cc{{\mathcal C}}
\def \Ec{{\mathcal E}}
\def \Pc{{\mathcal P}}
\def \Sc{{\mathcal S}}
\def \eps{\varepsilon}
\def \Hc{\mathcal{H}}
\def \bbR{\mathbb{R}}
\newcommand{\beq}{\begin{equation}}
\newcommand{\eeq}{\end{equation}}
\newcommand{\lb}{\label}
\title{Policy Iteration for Exploratory Hamilton--Jacobi--Bellman Equations}
\date{}	
\author
{Hung Vinh Tran\thanks{Department of Mathematics, University of Wisconsin Madison, Van Vleck Hall, 480 Lincoln Drive, Madison, WI 53706, \texttt{hung@math.wisc.edu}.}
\and Zhenhua Wang\thanks{Corresponding author. Zhongtai Securities Institute for Financial Studies, Shandong University, 27 Shanda Nanlu, Jinan, Shandong, 250100, \texttt{zhenhuaw@sdu.edu.cn}.}
\and Yuming Paul Zhang\thanks{Department of Mathematics and Statistics, Auburn University, Parker Hall, 221 Roosevelt Concourse, Auburn, AL 36849, \texttt{yzhangpaul@auburn.edu}.}
}
\begin{document}
\maketitle

\begin{abstract}
 We study the policy iteration algorithm (PIA) for entropy-regularized stochastic control problems on an infinite time horizon with a large discount rate, focusing on two main scenarios.  First, we analyze PIA with bounded coefficients where the controls applied to the diffusion term satisfy a smallness condition. We demonstrate the convergence of PIA based on a uniform $\Cc^{2,\alpha}$ estimate for the value sequence generated by PIA, and provide a quantitative convergence analysis for this scenario. Second, we investigate PIA with unbounded coefficients but no control over the diffusion term. In this scenario, we first provide the well-posedness of the exploratory Hamilton--Jacobi--Bellman equation with linear growth coefficients and polynomial growth reward function. By such a well-posedess result we achieve PIA's convergence by establishing a quantitative locally uniform $\Cc^{1,\alpha}$ estimates for the generated value sequence.     
\end{abstract}

\maketitle

\textit{Key words:} Hamilton--Jacobi--Bellman equations, policy iteration algorithm, stochastic control, reinforcement learning, entropy regularization, convergence rate.

\medskip

\textit{AMS subject classifications:}
  35F21, 60J60, 68W40, 93E20.

\section{Introduction}
Policy improvement involves updating the current strategy to enhance performance. 
This iterative process, known as a policy improvement algorithm (PIA, also called policy iteration algorithm), aims at converging towards an optimal policy through successive refinements. Rooted in Dynamic Programming, PIA plays an important role in Markov decision processes problems (MDPs) and reinforcement learning (RL), which could be dated back to Bellman \cite{bellman1957dynamic} and Howard \cite{howard1960dynamic}. 
The convergence rate of PIA for an infinite time horizon was investigated in Puterman, Brumelle \cite{puterman1979convergence}. For discrete-time MDPs, PIA has been well explored and its convergence has been established under suitable conditions on the model parameters, see e.g., Puterman \cite{puterman2014markov}, Sutton, Barto \cite{sutton2018reinforcement}, and Bertsekas \cite{bertsekas2015value}, among many others.

In the framework of controlled ODEs or deterministic optimal control, the convergence of PIA has been studied in linear quadratic settings, see Abu-Khalaf, Lewis \cite{abu2005nearly}, Vrabie, Pastravanu, Abu-Khalaf,  Lewis \cite{vrabie2009adaptive}, and the references therein. Lee and Sutton \cite{lee2021policy} proved the convergence under certain regularity and fixed point assumptions. To overcome the ill-posedness of PIA for general controlled ODEs,  Tang, Tran, and Zhang \cite{tang2023policy} proposed a semi-discrete scheme for the PIA and showed its general exponential convergence rate.
Lee and Kim \cite{LeeKim} incorporated a deep operator network with the scheme in \cite{tang2023policy} to numerically solve the PIA and the optimal control problems.

For stochastic control problems (without entropy-regularization), 
Krylov \cite{krylov2008controlled} and Puterman \cite{Puterman81} showed that the optimal value is recovered under the PIA for a specific control problem with a compact space-time domain.
Jacka and Mijatovi\'c \cite{jacka2017policy} outlined a list of assumptions leading to PIA's convergence towards optimal control, offering illustrative examples. 
Afterward, Kerimkulov, \v{S}i\v{s}ka, and Szpruch \cite{kerimkulov2020exponential} established the convergence and studied the convergence rate by assuming a certain regularity of the optimal value function and the control does not appear in the diffusion term. 
The convergence of PIA was further studied in some problems in mean-field games, see Cacace, Camilli, Goffi \cite{cacace2020policy}, and Camilli, Tang \cite{camilli2021rates}.

In the RL literature, it is now well-known that the entropy regularization (also termed the ``softmax" criterion) encourages exploration of the unknown environment through measure-valued control strategies (i.e., relaxed controls). This approach prevents early settlement to suboptimal strategies, a problem known as the curse of optimality, for a detailed explanation, see Zhou \cite{zhou2021curse}. With entropy regularization, Wang, Zariphopoulou, and Zhou \cite{wang2019exploration} opened the door to incorporating continuous-time stochastic control problems into the exploratory framework of RL. The following so-called {\it exploratory} Hamilton-Jacobi-Bellman (HJB) equation thereby was introduced and first studied in a linear-quadratic setting in \cite{wang2019exploration}.
\be\label{eq:HJB0}
\begin{aligned}
\rho v-\sup_{\pi\in \Pc(U)} \left\{ \int_U \left[ b(x,u)\cdot {D}  v+\frac{1}{2} \tr(\sigma\sigma^T(x,u) D^2 v(x))+r(x,u)-\lambda \ln(\pi(u)) \right] \pi(u)\,du  \right\} = 0.
\end{aligned}
\ee
Here, $\Pc(U)$ represents all probability densities on the action space $U$.
Moreover, as the weight $\lambda$ tends to zero, the optima and optimal relaxed strategy for an exploratory stochastic control problem converge to the optima and optimal feedback control for the corresponding standard stochastic control problem, respectively, as proved by Tang, Zhang, and Zhou \cite{tang2022exploratory}.
Such entropy-regularization formulation has been further extended to various settings, such as mean-variance problems (Wang, Zhou \cite{wang2020continuous}), stopping problems (Dong \cite{dong2022randomized}), and mean-field games/mean-field controls (Guo, Xu, Zariphopoulou \cite{Guo22}, Firoozi, Jaimungal \cite{MR4385154}, Frikha, Germain, Lauri{\`e}re, Pham, Song \cite{frikha2023actor}, Wei, Yu \cite{wei2023continuous}).

The PIA for the HJB equation \eqref{eq:HJB0} can be written as follows in a PDE framework.
\begin{algorithm}
	\caption{Policy Iteration Algorithm for \eqref{eq:HJB0}}
        \label{alg:PIA}
	\begin{algorithmic}
		\State {\bf Initialization:} Take suitable $v^0\in \Cc^{2}$. \State{\bf Iteration:} For $n\in \N$, {\bf do}
		%$\pi^1=\Gamma(x,{D}  v^0,u)$, and $v^1(\cdot):= V^{\pi^1}(\cdot)$;
		%\While{$\|v^n-v^{n-1}\|_{L^\infty(\R^d)}>\epsilon$}
		\begin{align}
		&\text{Set}\;	\pi^n(x,u) :=\frac{\exp\left[\frac{1}{\lambda }\left(  b(x,u)\cdot {D}  v^{n-1}(x)+2^{-1}\tr(\sigma\sigma^T(x,u) D^2 v^{n-1}(x))+r(x,u) \right)\right]}{\int_U\exp\left[ \frac{1}{\lambda }\left( b(x,u')\cdot {D}  v^{n-1}(x)+2^{-1}\tr(\sigma\sigma^T(x,u') D^2 v^{n-1}(x))+r(x,u') \right)\right] \,du'}; \label{eq:PIApin}\\
		& \text{Solve}\; v^n(x) \; \text{for} \notag
		\end{align}
	\be\label{eq:PIAvn}  
	\rho v^n-\int_U \left[ b(x,u)\cdot {D}  v^n+\frac{1}{2} \tr(\sigma\sigma^T(x,u) D^2 v^n(x))+r(x,u)-\lambda \ln(\pi^n(u)) \right] \pi^n(u)\,du= 0.
	\ee
	%	\EndWhile
	\end{algorithmic}
\end{algorithm}

%As shown in \eqref{eq:PIApin}, entropy-regularization makes the updated policy a Gibbs form feedback control in each iteration. 

For entropy-regularized PIA, \cite{wang2020continuous} demonstrated its policy improvement property for linear-quadratic mean-variance problems and shows that optimal solutions can be achieved within two iterations due to the Gaussian densities of the optimal feedback control in this setting. \cite{dong2022randomized} introduced the PIA for entropy-regularized stopping problems under a finite horizon and proved its convergence, where the underlying process is a geometric Brownian motion. Notably, the policy improvement property may fail when seeking Nash equilibrium strategies in a time-inconsistent stochastic control problem, as discussed in Dai, Dong, and Jia \cite{dai2023learning}. 
%More recently, Huang, Wang, and Zhou \cite{pia2022} proved a qualitative convergence result with bounded coefficients when the diffusion term is not controlled (i.e., when $\sigma$ is independent of $u$).
Huang, Wang, and Zhou \cite{pia2022} proved a qualitative convergence result with bounded coefficients when the diffusion term is not controlled (that is, when $\sigma$ is independent of $u$). In a very recent paper by Tang and Zhou \cite{tang-zhou2024}, a regret analysis in terms of the iteration step $n$ and the entropy weight $\lambda$ was provided under similar conditions in a finite horizon setting.

Despite these developments, the general convergence of Algorithm \ref{alg:PIA} remains largely unexplored, especially when the coefficients $b,\sigma,r$ are unbounded and controls appear in the diffusion term. The first question concerns the well-posedness of PIA: whether the updated control $\pi^n$ in \eqref{eq:PIApin} is well-defined and whether the elliptic equation \eqref{eq:PIAvn} admits a unique solution. Once this is resolved, the next goal is to investigate the convergence of the generated sequence $\{v^n\}_n$ to the optimal solution of the entropy-regularized stochastic control, which should solve the HJB equation \eqref{eq:HJB0}. As indicated in \eqref{eq:PIApin}, entropy regularization makes the policy sequence $\{\pi^n\}_n$ less singular, but it introduces second-order derivatives of the value sequences into the entropy of the policy sequence. This, in turn, makes achieving the compactness of the policy sequence more challenging.

\medskip

In this paper, we investigate PIA with sufficiently large discounting rates in two scenarios. 
Firstly, we study PIA with bounded coefficients, allowing the control to influence the diffusion terms. This generalization is significant and challenging, as most literature on PIA considers problems with controls in the source or drift terms, resulting in linear or semilinear partial differential equations. Specifically, in their iterations, the updated policies are independent of the second-order derivatives of the solutions. In contrast, in our setting, the limiting equation is fully nonlinear, and the iterated controls depend also on the second derivatives.

So far, we have proved the convergence of PIA under the condition that the control has a small effect on the diffusion. Here, the smallness condition refers to the controlled volatility of the diffusion term being close to an uncontrolled volatility, see \eqref{cond2} for the precise formulation. The key step is to obtain a uniform $\Cc^{2,\alpha}$ estimate for the solutions in the iteration (see Theorem \ref{T.1.1}). This result can be viewed as a version of the Evans-Krylov theorem for PIA. The smallness assumption is essential in the proof of the uniform regularity result, as there is no obvious regularizing effect from the iteration process. It is unclear whether PIA converges without the smallness assumption and regardless of uniform regularity. Furthermore, we provide quantitative results on the convergence in Theorem \ref{T.2.1} and Remark \ref{rem:quant}.
In particular, Remark \ref{rem:quant} gives an exponential convergence rate for the result in \cite{pia2022} when the diffusion term is not controlled.

The second objective is to investigate PIA with unbounded coefficients in $\R^d$, where the controls appear only in lower order terms. In this scenario, we allow the coefficients $b$ and $\sigma$ to exhibit linear growth in $x$, and the reward function $r$ to have polynomial growth, encompassing linear quadratic settings. It is important to note that under such settings, the well-posedness of solutions to elliptic equations or PIA is not guaranteed without additional conditions. Specifically, $\rho$ needs to be large, and our results are optimal in terms of the growth rate of $r$ (see Proposition \ref{P.4.1} and Remark \ref{rm1}). To show the convergence of PIA, we obtain locally uniform quantitative $\Cc^{1,\alpha}$ estimates for the solutions. Here, the analysis is localized, and the estimates must be done carefully with explicit dependence on the size of the coefficients. %{\color{blue}In particular, the result of the convergence of PI from \cite{pia2022} is recovered by our Theorem \ref{T.5.1}.}
In particular, with the help of the logarithmic growth of the entropy provided in \cite[Corollary 4.2]{pia2022}, we also recover the convergence result from that work in Theorem \ref{T.5.1}.

We also comment that our approach works well for the finite horizon problem. The uniform $\Cc^{2,\alpha}$ regularity follows from the combination of our argument and  \cite[Theorems 6.4.3 and 6.4.4]{Krybook}.

After our paper was finished, we learned that Ma, Wang, and Zhang \cite{MWZ24} obtained some very interesting related results independently using a different approach when the diffusion term is not controlled. 
Specifically, the authors employed Feynman-Kac type probabilistic representation formulas for solutions of the iterative PDEs and their derivatives.
By this method, they first provided a simple proof of the convergence result in \cite{pia2022}, and then achieved an exponential convergence rate in the infinite horizon model with a large discount factor (which is essentially the same as Remark \ref{rem:quant}) and in the finite horizon model.

%After our paper was finished, we learned that Ma, Wang, and Zhang \cite{MWZ24} obtained some very interesting related results independently by using a different approach when the diffusion term is not controlled. Specifically, by using Feynman-Kac type probabilistic representation formulas for solutions of the PDEs and their derivatives, the authors provide a simple proof of the qualitative convergence result in \cite{pia2022} from scratch. They then  obtain an exponential rate of convergence in the infinite horizon model with a large discount factor (which is essentially the same as Remark \ref{rem:quant})  and in the finite horizon model.

\subsection{Model formulation}
Denote by $|\cdot|$ the Euclidean norm. For a fixed dimension $d\in\N$, we denote by {$B_R(x)$} the open ball centered at $x\in\R^d$ with radius $R$, and we simply write $B_R$ for $B_R(0)$. We further use $\Sc^d$ to denote the set of all symmetric, non-negative definite matrices of size $d$. We use $\I_d$ to denote the identity matrix of size $d$.

For a multi-index $a=(a_1,\cdots, a_m)$, denote $|a|_1 = \sum_{i=1}^m a_i$.
Given a non-empty connected open set $V\subseteq \R^d$, for $k\in\N\cup\{0\}$ and $\alpha\in (0,1)$, we define
\[
[w]_{\Cc^{k,\alpha}(V)}:= \sup_{x,y\in V, |x-y|\leq 1, |a|_1=k}\frac{|D^a w(x)-D^a w(y)|}{|x-y|^\alpha}, \quad \|w\|_{L^\infty(V)}:= \sup_{x\in V}|w(x)|,
\]
\[
\llbracket w\rrbracket_{\Cc^k(V)}:=\sup_{|a|_1=k}\|D^a w\|_{L^\infty(V)},\quad \|w\|_{\Cc^{k,\alpha} (V)}:=\sum_{0\leq j\leq k} \left(  \llbracket w\rrbracket_{\Cc^j (V)}+[w]_{\Cc^{j,\alpha }(V)} \right),\quad
\]
and
\[
 \|w\|_{\Cc^k(V)}:=\sum_{0\leq j \leq k} \llbracket w\rrbracket_{\Cc^j(V)},
\]
and denote by $\Cc^{k,\alpha}(V)$ the set of all $w$ such that $\|w\|_{\Cc^{k,\alpha}(V)}<\infty$. For any vector-valued or matrix-valued function $w=(w_{ij}(x))$, we denote
\[
|w(x)|:=\max_{ij}|w_{ij}(x)|\quad\text{and}\quad \|w\|_{L^\infty(V)}:=\sup_{x\in V}|w(x)|.
\]
When $V$ equals to the whole space $\R^d$, we simply write 
\[
[w]_{k,\alpha}=[w]_{\Cc^{k,\alpha}(\R^d)},\quad \|w\|_{\infty}=\|w\|_{L^\infty(\R^d)}, \quad \llbracket w\rrbracket_{k}=\llbracket w\rrbracket_{\Cc^k(\R^d)}, \]
\[
\|w\|_{k,\alpha}=\|w\|_{\Cc^{k,\alpha}(\R^d)},\quad\|w\|_{k}=\|w\|_{\Cc^k(\R^d)},
\]
%, where
%\[
%[w]_{k,\alpha }:= \sup_{x,y\in\R^d, |x-y|\leq 1,|a|_1=k}\frac{|D^a w(x)-D^a w(y)|}{|x-y|^\alpha}.
%\]	
For any $q\ge 1$, we will also use the Sobolev norm $\|\cdot\|_{W^{k,q}(V)}$.

\medskip

Fix positive integers \( d \), \( m \), and \( l \). Consider an \( m \)-dimensional Brownian motion and \(\mathcal{F} = (\mathcal{F}_t)_{t \geq 0}\) as the natural filtration generated by \((W_t)_{t \geq 0}\), that is, \(\mathcal{F}_t = \sigma(W_s, 0 \leq s \leq t)\) for all \( t \geq 0 \). Let \( U \) be a bounded domain in \( \mathbb{R}^l \) with its Lebesgue measure \(|U|\in (0,\infty)\). Without loss of generality, we simply assume $|U|=1$ throughout the paper.
Denote by \(\mathcal{P}(U)\) the set of all probability density functions on \( U \). Given a relaxed control \(\pi = (\pi_t)_{t \geq 0}\), which is \(\mathcal{F}\)-adapted and \(\pi_t \in \mathcal{P}(U)\) for all \( t \geq 0 \), we consider the controlled process
\[
dX^\pi_t = \left(\int_U b(X^\pi_t,u)\pi_t(u)\,du\right) dt+\sqrt{\left(\int_U \sigma (X^\pi_t,u)\sigma (X^\pi_t,u)^T\pi_t(u)\,du\right)}\,dW_t,
\]
with $X_0=x\in \R^d$ and functions $b(x,u)=(b_1(x,u),...,b_d(x,u)): \R^d\times U\to \R^d$, $\sigma(x)= (\sigma_{ij}(x))_{i, j}:\R^d\to \R^{d\times m}$. The value function under $\pi$ is given by
$$
V^\pi(x):=\E_x\left[\int_0^\infty {e^{-\rho t}} \left( \int_U r(X^\pi,u)\pi_t(u)\,du-\lambda \int_U \ln(\pi_t(u)) \pi_t(u)\,du\right) \,dt \right].
$$
For any $x, p\in \R^d, X\in \Sc^d$, and $\pi\in \Pc(U)$, define 
\be\label{eq:Fpi} 
F_\pi(x, p, X):=  \int_U \left[r(x,u)+ b(x,u) \cdot p+ \frac{1}{2}\tr(\sigma\sigma^T(x,u) X) -\lambda \ln(\pi(u))\right] \pi(u)\,du,
\ee 
and
\beq\lb{0.2}
F(x,p,X):=\sup_{\pi\in \Pc(U)}F_\pi(x,p,X),
\eeq
By Dynamic Programming argument, $v^*(x):=\sup_{\pi}V^\pi(x)$ is a viscosity solution to the HJB equation \eqref{eq:HJB0}, which is now rewritten as
\be\label{eq:HJB} 
\rho v-F(x,Dv,D^2v)=0
\ee

Given $x, p\in \R^d, X\in \Sc^d$, set 
\beq\lb{0.1}
\Gamma(x, p, X)(u):=\frac{\exp\left[\frac{1}{\lambda}\left(   r(x,u)+ b(x,u) \cdot p+ \frac{1}{2}\tr(\sigma\sigma^T(x,u) X)\right)\right]}{\int_U \exp\left[  \frac{1}{\lambda} \left(r(x,u')+ b(x,u') \cdot p+ \frac{1}{2}\tr(\sigma\sigma^T(x,u') X)\right) \right]\,du'}.
\eeq
We claim that $\pi(u):=\Gamma(x,p,X)(u)$ is the maximizer of \eqref{0.2} (see the variational principle in \cite[Proposition 1.4.2]{dupuis1997weak}). Indeed, let $\phi(u)$ be such that $\int_U\phi(u)du=0$ and $\pi+\eps \phi\geq 0$ for all $\eps\in (-1,1)$. Then $\eps=0$ is a critical point of $G(\pi+\eps\phi):=F_{\pi+\eps\phi}(x,p,X)$, and the first variation of $G$ satisfies
\[
\int_U \left[r(x,u)+ b(x,u) \cdot p+ \frac{1}{2}\tr(\sigma\sigma^T(x,u) X) -\lambda \ln(\pi(u))\right] \phi(u)\,du=0.
\]
This holds for all such $\phi$, so $r(x,u)+ b(x,u) \cdot p+ \frac{1}{2}\tr(\sigma\sigma^T(x,u) X) -\lambda \ln(\pi(u))$ is constant in $u$. Using that $\int_U \pi(u)du=1$ yields the formula \eqref{0.1}.

In particular, we have
\beq\lb{0.3}
F_{\pi^*}(x,Dv^*,D^2 v^*)=F(x,Dv^*,D^2 v^*)\quad \text{ with }\pi^*(x,u):=\Gamma(x,Dv^*(x),D^2v^*(x))(u).
\eeq
Recall the PIA, and $\pi^n$ defined in \eqref{eq:PIApin} can be rewritten as
\beq\lb{1.11}
\begin{aligned}
\pi^n(x,u)=&\Gamma(x,Dv^{n-1}(x),D^2v^{n-1}(x))(u)\\
=&
\argmax_{\pi(\cdot)\in \Pc(U)}F_\pi(x,Dv^{n-1}(x),D^2v^{n-1}(x)),
\end{aligned}
\eeq
and \eqref{eq:PIAvn} becomes
\begin{equation}\label{eq:PI}
    \rho v^n-F_{\pi^{n}}(x,Dv^n, D^2v^{n})=0.
\end{equation}

We will require $\rho$ to be large in both the bounded and unbounded settings, which is essential for a better regularity of the solutions.
This is known in the literature, and we refer the reader to \cite[Theorem 2.8]{tranbook}, which discusses the first-order case.
%...
%The policy improvement property tells that 
%$$
%v^n \in \Cc^2(\R^d), \quad v^n\uparrow v^*.
%$$
%Later, we denote $\Sigma=\sigma\sigma^T$.

\subsection{Organization of the paper}
The paper is organized as follows. Sections \ref{sec:bounded.sigma} and \ref{sec: converge.bounded} are dedicated to the first objective for bounded coefficients where the controls applied to the diffusion term satisfy a smallness condition. In Section \ref{sec:bounded.sigma}, Theorem \ref{T.1.1} demonstrates the locally uniform $\Cc^{2,\alpha}$ bound for the sequence $\{v^n\}_n$ generated by PIA. Section \ref{sec: converge.bounded} establishes the convergence of PIA in Theorem \ref{thm:bound.converge} and provides a quantitative result in Theorem \ref{T.2.1} for this scenario.
Section \ref{sec:unbound.nosigma} studies the well-posedness of the nonlinear elliptic equation \eqref{eq:HJB} for the case of unbounded coefficients. 
Section \ref{subsec:unbound.convergence} then addresses the well-posedness of PIA and its convergence for the second scenario with unbounded coefficients where the diffusion term is independent of the control.

\section{Uniform $\Cc^{2,\alpha}$ estimates for bounded equations}\label{sec:bounded.sigma}

%In the following lemma, we drop the dependence of $u$ from the notations of $r,b,\Sigma$. The estimates are uniform for all $u\in U$.

We first estimate several norms of solutions with explicit dependence on $\rho\geq 1$. 
\begin{Lemma}\lb{L.1}
Let $\rho\geq 1$, and let $v$ be a solution to
\[
\rho v(x)- \tilde{r}(x) - \tilde{b}(x) \cdot {D}  v(x)-2^{-1}\tr(\tilde\Sigma(x) {D} ^2 v(x)) =0.
\]
Assume that $\tilde\Sigma\geq \I_d/C_0$ for some $C_0>0$. Then there exists an increasing function $\eta: [1,\infty)\to [1,\infty)$ independent of $\rho$ such that if
\beq\lb{2.1'}
\|\tilde r(\cdot)\|_{0,\alpha},\,\|\tilde b(\cdot)\|_{0,\alpha} ,\,\|\tilde\Sigma(\cdot)\|_{0,\alpha}\leq A\quad \text{ for some $A\geq 1$ and $\alpha\in (0,1)$,}
\eeq
we have
\beq\lb{1.1}
\rho\|v\|_{\infty},\;\rho^{1-\frac\alpha2}[v]_{0,\alpha},\; \rho^{\frac12-\frac\alpha2}[v]_{1,\alpha},\; \rho^{-\frac\alpha2}[v]_{2,\alpha},\; \rho^\frac12\llbracket v\rrbracket_1,\; \llbracket v\rrbracket_2\leq \eta(A).
\eeq
\end{Lemma}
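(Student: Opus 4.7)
The plan is a parabolic-style rescaling that turns the discount $\rho$ into an order-one coefficient, followed by standard Schauder theory on $\R^d$. Introduce $w(y):=v(\rho^{-1/2}y)$. A direct computation shows that $w$ satisfies
\[
w(y)-\hat r(y)-\hat b(y)\cdot Dw(y)-\tfrac12\tr(\hat\Sigma(y)D^2 w(y))=0\quad\text{on }\R^d,
\]
with $\hat r(y):=\rho^{-1}\tilde r(\rho^{-1/2}y)$, $\hat b(y):=\rho^{-1/2}\tilde b(\rho^{-1/2}y)$, and $\hat\Sigma(y):=\tilde\Sigma(\rho^{-1/2}y)$. Ellipticity is preserved, $\hat\Sigma\geq \I_d/C_0$. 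Because the paper's Hölder seminorms are taken only over pairs with distance at most one and $\rho^{-1/2}\leq 1$, each factor $\rho^{-1},\rho^{-1/2},\rho^{-\alpha/2}$ arising from the rescaling only improves the bounds; in particular $\|\hat r\|_{0,\alpha}\leq 2A/\rho$, while $\|\hat b\|_{0,\alpha}$ and $\|\hat\Sigma\|_{0,\alpha}$ stay controlled by $2A$.

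Next, the coercive $+w$ term yields via the maximum principle $\|w\|_\infty\leq \|\hat r\|_\infty\leq A/\rho$. Applying the interior Schauder estimate on each unit ball $B_1(y_0)$ and taking the supremum over $y_0\in\R^d$ gives
\[
\|w\|_{\Cc^{2,\alpha}(\R^d)}\leq C\bigl(\|w\|_\infty+\|\hat r\|_{0,\alpha}\bigr)\leq \eta_0(A)/\rho
\]
for some increasing $\eta_0$ depending on $C_0,d,\alpha$ but not on $\rho$.

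Finally, the chain rule $Dv(x)=\rho^{1/2}Dw(\rho^{1/2}x)$ and $D^2v(x)=\rho D^2w(\rho^{1/2}x)$ delivers the $L^\infty$-type bounds in \eqref{1.1} directly: $\rho\|v\|_\infty$, $\rho^{1/2}\llbracket v\rrbracket_1$, $\llbracket v\rrbracket_2\leq \eta_0(A)$. The Hölder seminorms require a splitting argument: the condition $|x_1-x_2|\leq 1$ for $v$ corresponds to $|y_1-y_2|\leq \rho^{1/2}$ for $w$, so one handles the small-scale regime $|y_1-y_2|\leq 1$ using $[D^k w]_{0,\alpha}\leq \eta_0(A)/\rho$, and the large-scale regime $1<|y_1-y_2|\leq \rho^{1/2}$ using $2\|D^k w\|_\infty\leq 2\eta_0(A)/\rho$ together with $|y_1-y_2|^\alpha\geq 1$. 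Accumulating the resulting powers of $\rho$ reproduces exactly the weights $\rho^{1-\alpha/2}$, $\rho^{1/2-\alpha/2}$, and $\rho^{-\alpha/2}$ appearing in \eqref{1.1}. There is no deep obstacle; the only technical subtlety is this bookkeeping of $\rho$-powers in the Hölder seminorms, which relies essentially on the paper's convention that Hölder seminorms only see scales $\leq 1$.
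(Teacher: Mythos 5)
Your proof follows exactly the paper's scaling argument: the same rescaling $w(y)=v(\rho^{-1/2}y)$, the same maximum principle bound $\|w\|_\infty\le A/\rho$, the same application of interior Schauder estimates, and the same conversion back to $v$. The only difference is that you spell out the small-scale/large-scale splitting when translating Hölder seminorms of $w$ back into seminorms of $v$, a routine detail the paper leaves implicit but which is indeed needed given the convention that seminorms only test scales $\le 1$; everything checks out.
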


\begin{proof}
By comparing $v$ with $\pm\|\tilde{r}\|_{\infty}/\rho$, we obtain
$\|v\|_{\infty}\leq \|\tilde{r}\|_{\infty}/{\rho}$. 

To prove \eqref{1.1}, we apply a scaling argument.
Note that $w(x):={v}(x/\sqrt{\rho})$ solves
\[
w- \frac{\tilde{r}(x/\sqrt{\rho})}\rho - \frac{\tilde{b}(x/\sqrt{\rho})}{\sqrt{\rho}} \cdot {D}  w-\frac{1}{2}\tr(\tilde{\Sigma}(x/\sqrt{\rho}) {D} ^2w) =0.
\]
It is direct to see that the $\alpha$-H\"{o}lder norms of 
\[
\frac{\tilde{r}(x/\sqrt{\rho})}\rho,\quad \frac{\tilde{b}(x/\sqrt{\rho})}{\sqrt{\rho}},\quad \tilde{\Sigma}(x/\sqrt{\rho}) %\quad\text{and}\quad \widehat{\Hc}^n(x/\sqrt{\rho})
\]
are non-increasing as $\rho\geq 1$ increases, and also that $\tilde{\Sigma}(x/\sqrt{\rho})\geq \I_d/C_0$ is preserved.
Thus, the famous interior Schauder estimates state
\[
\|w\|_{2,\alpha}\leq C_A (\|w\|_{\infty}+\rho^{-1}\| \tilde{r}(\cdot/\sqrt{\rho})\|_{0,\alpha})\leq C_A/\rho
\]
where $C_A$ depends only on $A$ and the dimension.
This implies that
\[
 [v]_{0,\alpha} \leq C_A\rho^{-1+\frac\alpha2},\quad [v]_{1,\alpha}\leq C_A\rho^{-\frac12+\frac\alpha2},\quad [v]_{2,\alpha}\leq C_A\rho^{\frac\alpha2},\quad \llbracket v\rrbracket_1\leq C_A\rho^{-\frac12},\quad \llbracket v\rrbracket_2\leq C_A.
\]
%Finally, it follows from the equation that
%\[
%[\tr\Sigma D^2v]_{0,\alpha}\leq C.
%\]  
\end{proof}

\begin{Remark}
    We note that the bound \eqref{1.1} is essentially optimal thanks to the scaling approach.
    In particular, the bound $[v]_{2,\alpha}\leq C_A\rho^{\frac\alpha2}$ cannot be improved in general.
    Here is another way to see it.
    Indeed, \eqref{1.1} gives $\rho \|v\|_{0,\alpha} \leq C_A \rho^{\frac{\alpha}{2}}$.
    Hence, by applying the interior Schauder estimates directly to $v$, we get
    \[
    \|v\|_{2,\alpha}\leq C_A \left(\rho\|v\|_{0,\alpha}+\| \tilde{r}\|_{0,\alpha}\right)\leq C_A \rho^{\frac{\alpha}{2}}.
    \]
\end{Remark}

Recall $\Gamma(x, p, X)(u)$ from \eqref{0.1},
and define
\[
\pi(x,u):=\Gamma(x, Dv,D^2v)(u).
\]
Denoting $\Sigma=\sigma\sigma^T$, we assume that there exists $C_0\geq 1$ such that uniformly for all $u\in U$,
\beq\lb{cond1}
\Sigma(\cdot,u)\geq \I_d/C_0,\quad \|r(\cdot,u)\|_{0,\alpha},\,\|b(\cdot,u)\|_{0,\alpha} ,\,\|\Sigma(\cdot,u)\|_{0,\alpha}\leq C_0.
\eeq
We have the following estimates for $\pi$ and the entropy.

\begin{Lemma}\lb{L.2}
Let $v$ satisfy \eqref{1.1} with $A_1$ in place of $\eta(A)$. Assume \eqref{cond1}, $|U|=1$, $\rho\geq 1$, and that there exist $\Sigma_0(x)$ and $\eps_0,\eps_1\in (0,1)$ such that for $M(x,u):=\Sigma(x,u)-\Sigma_0(x)$,
\beq\lb{cond2}
\sup_{u\in U} \|M(\cdot,u)\|_{\infty}\leq \eps_0,\quad \sup_{u\in U}[M(\cdot,u)]_{0,\alpha}\leq \eps_1.
\eeq
Then there exists $C$ depending only on $\lambda,C_0$ such that 
$
\pi(x,u)=\Gamma(x,Dv,D^2 v)(u)$
satisfies 
\begin{align*}
&\|\pi(\cdot,u)\|_{\infty}\leq\exp\left[C(1+A_1(\rho^{-\frac12}+\eps_0 ))\right],
\\
&[\pi(\cdot,u)]_{0,\alpha}\leq \left(1+A_1(\rho^{\frac\alpha2-\frac12}+\eps_1+\rho^\frac{\alpha}{2}\eps_0)\right)\exp\left[C(1+A_1(\rho^{-\frac12}+\eps_0 ))\right],
\\ 
&\left\|\int_U \pi(\cdot,u)\ln\pi(\cdot,u)\,du\right\|_{\infty}\leq C\left(1+A_1(\rho^{-\frac12}+\eps_0)\right),
\\
&\left[\int_U\pi(\cdot,u)\ln \pi(\cdot,u)\,du\right]_{0,\alpha}\leq \exp\left[C\left(1+A_1(\rho^{\frac\alpha2-\frac12}+\eps_1+\rho^\frac{\alpha}{2}\eps_0) \right)\right].  
\end{align*}
\end{Lemma}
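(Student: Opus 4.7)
The central observation is that the smallness condition \eqref{cond2} lets us eliminate the large second-order contribution from the exponent defining $\Gamma$. Write $\Sigma(x,u) = \Sigma_0(x) + M(x,u)$ and note that the $u$-independent term $\frac{1}{2}\tr(\Sigma_0(x) D^2 v(x))$ cancels between numerator and the normalising integral in \eqref{0.1}. Thus, setting
\[
h(x,u) := \tfrac{1}{\lambda}\left( r(x,u) + b(x,u)\cdot Dv(x) + \tfrac{1}{2}\tr(M(x,u) D^2 v(x)) \right),
\]
we have $\pi(x,u) = e^{h(x,u)} / Z(x)$ with $Z(x) = \int_U e^{h(x,u')}\,du'$. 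Using \eqref{cond1}, \eqref{cond2}, $|U|=1$, and the bounds on $\llbracket v\rrbracket_1, \llbracket v\rrbracket_2$ from the hypothesis, I would verify that there is a constant $C = C(\lambda, C_0)$ such that the ``$L^\infty$-size''
\[
K := \|h\|_{L^\infty(\R^d\times U)} \leq C\bigl(1 + A_1(\rho^{-1/2} + \eps_0)\bigr),
\]
and the ``Hölder-size''
\[
L := \sup_{u \in U}[h(\cdot,u)]_{0,\alpha} \leq C\bigl(1 + A_1(\rho^{\alpha/2 - 1/2} + \eps_1 + \rho^{\alpha/2}\eps_0)\bigr),
\]
where for $L$ the bounds on the products $b\cdot Dv$ and $\tr(M D^2 v)$ follow from the usual Leibniz rule for H\"older seminorms combined with $[v]_{1,\alpha} \leq A_1 \rho^{\alpha/2 - 1/2}$, $[v]_{2,\alpha} \leq A_1 \rho^{\alpha/2}$.

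Once these two quantities are controlled, the four estimates become mechanical. Since $|U|=1$, $e^{-K} \leq Z(x) \leq e^{K}$, which immediately gives $\pi(x,u) \leq e^{2K}$, yielding the first bound. For the H\"older bound, write
\[
\pi(x,u) - \pi(y,u) = \frac{(e^{h(x,u)} - e^{h(y,u)})Z(y) + e^{h(y,u)}(Z(y) - Z(x))}{Z(x)Z(y)},
\]
estimate each piece using $|e^a - e^b| \leq e^{\max(a,b)}|a-b|$ and $[Z]_{0,\alpha} \leq e^K L$, and collect to obtain $[\pi(\cdot, u)]_{0,\alpha} \leq CL\, e^{4K}$, which matches the claimed form.

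For the entropy, I would use $\ln \pi(x,u) = h(x,u) - \ln Z(x)$, so
\[
\int_U \pi(x,u) \ln \pi(x,u)\, du = \int_U \pi(x,u) h(x,u)\, du - \ln Z(x).
\]
Both terms are pointwise bounded by $K$ in absolute value, giving the $L^\infty$ estimate. For the H\"older seminorm, split the difference of $\int \pi\, h\, du$ at $x$ and $y$ into $\int \pi(x,u)[h(x,u) - h(y,u)]du + \int [\pi(x,u) - \pi(y,u)] h(y,u) du$, use the uniform Hölder bound $L$ for the first piece and the bound $|h| \leq K$ together with the H\"older estimate on $\pi$ for the second, and combine with $[\ln Z]_{0,\alpha} \leq e^{2K} L$ from $|Z| \geq e^{-K}$. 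The result collapses into something of the form $CL(1 + K)e^{4K}$, which (since $K$ appears inside an exponential) is absorbed into $\exp[C(1 + A_1(\rho^{\alpha/2 - 1/2} + \eps_1 + \rho^{\alpha/2}\eps_0))]$.

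The only mild obstacle is bookkeeping: one has to keep track of which pieces contribute $\rho^{-1/2}$ versus $\rho^{\alpha/2 - 1/2}$ versus $\rho^{\alpha/2}$ factors, and to notice that the ``bad'' term $\tr(\Sigma(x,u) D^2 v)$, which without \eqref{cond2} would enter at size $A_1$ (in $L^\infty$) and $A_1\rho^{\alpha/2}$ (in Hölder), is rendered of size $\eps_0 A_1$ and $\eps_1 A_1 + \eps_0 A_1 \rho^{\alpha/2}$ respectively, thanks to the splitting $\Sigma = \Sigma_0 + M$. Everything else is routine manipulation of Boltzmann-type densities.
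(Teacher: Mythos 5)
Your proposal is correct and follows essentially the same strategy as the paper: decompose $\Sigma=\Sigma_0+M$ so the $u$-independent second-order part cancels in the Boltzmann ratio, control the $L^\infty$ and H\"older sizes of the reduced exponent (your $h$ is the paper's $g$), then deduce the four estimates by routine manipulations. The only cosmetic difference is that you handle the entropy via the explicit split $\ln\pi = h - \ln Z$, whereas the paper estimates $|\ln\pi(x,u)-\ln\pi(y,u)|$ and $|\pi(y,u)/\pi(x,u)-1|$ directly; both routes give the same bound.
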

\begin{proof}
Let us write 
\[
f(x,u)=\frac{1}{\lambda}\left[r(x,u)+ b(x,u) \cdot Dv+ \frac{1}{2}\tr(\Sigma(x,u) D^2v)\right],
\]
and
\[
g(x,u)=\frac{1}{\lambda}\left[r(x,u)+ b(x,u) \cdot Dv+ \frac{1}{2}\tr((\Sigma(x,u) -\Sigma_0(x))D^2v)\right].
\]
It is easy to see that
\beq\lb{1.4}
\pi(x,u)=\frac{\exp(f(x,u))}{\int_U\exp(f(x,u'))\,du'}=\frac{\exp(g(x,u))}{\int_U\exp(g(x,u'))\,du'}.
\eeq

By the assumptions of \eqref{cond1}, \eqref{cond2}, and that \eqref{1.1} holds with $A_1$ in place of $\eta(A)$, it follows that for some $C=C(C_0,\lambda)$,
\beq\lb{1.2}
|g(x,u)|\leq C(1+\rho^{-\frac12}A_1+\eps_0 A_1).
\eeq
Therefore, also using $|U|=1$, we get
\beq\lb{1.3}
\exp\left[-C(1+\rho^{-\frac12}A_1+\eps_0 A_1)\right]\leq \pi(x,u)\leq \exp\left[C(1+\rho^{-\frac12}A_1+\eps_0 A_1)\right]
\eeq

Next, note that
\begin{align*}
&\left|\pi(x,u)-\pi(y,u)\right|=\left|\frac{\exp(g(x,u))\int_U\exp(g(y,u'))\,du'-\exp(g(y,u))\int_U\exp(g(x,u'))\,du'}{\int_U\exp(g(x,u'))\,du'\int_U\exp(g(y,u'))\,du'}\right|  \\
&\qquad\quad \leq \left|\frac{\int_U\exp(g(y,u')-g(x,u'))\,du'}{\int_U\exp(g(y,u'))\,du'}\right| \pi(x,u)+\left|\frac{\exp(g(x,u))-\exp(g(y,u))}{\int_U\exp(g(y,u'))\,du'}\right|.
\end{align*}
Therefore, \eqref{1.1}, \eqref{cond1}, and \eqref{cond2} yield
\beq\lb{1.5}
\begin{aligned}
[g(\cdot,u)]_{0,\alpha}&\leq {\lambda^{-1}}\left([r]_{0,\alpha}+[b\cdot Dv]_{0,\alpha}+[\Sigma-\Sigma_0]_{0,\alpha}\llbracket v \rrbracket_{2}+\|\Sigma-\Sigma_0\|_{\infty}[v]_{2,\alpha}\right)\\
&\leq C\left(1+A_1\rho^{\frac\alpha2-\frac12}+A_1\eps_1+A_1\rho^{\frac\alpha2}\eps_0  \right),
\end{aligned}
\eeq
and
\[
[\exp g(\cdot,u)]_{0,\alpha}\leq \left(1+A_1\rho^{\frac\alpha2-\frac12}+A_1\eps_1+A_1\rho^{\frac\alpha2}\eps_0 \right)\exp\left[C(1+\rho^{-\frac12}A_1+\eps_0 A_1)\right].
\]
Also using \eqref{1.2} and \eqref{1.3}, we obtain
\[
[\pi(\cdot,u)]_{0,\alpha}\leq \left[1+A_1(\rho^{\frac\alpha2-\frac12}+\eps_1+\rho^\frac{\alpha}{2}\eps_0)\right]\exp\left[C(1+A_1(\rho^{-\frac12}+\eps_0 ))\right].
\]
By \eqref{1.3} again and that $\pi$ is a probability distribution, we get
\[
\int_U |\ln \pi(x,u)|\pi(x,u)\,du\leq \|\ln \pi(\cdot,\cdot) \|_{\infty}\leq C\left[1+A_1(\rho^{-\frac12}+\eps_0)\right].
\]

For the last claim, note that
\begin{align*}
&\left|\int_U \pi(x,u)\ln \pi(x,u)\,du-\int_U \pi(y,u)\ln \pi(y,u)\,du \right|\\
&\qquad\qquad\leq \sup_{u\in U}|\ln \pi(x,u)-\ln\pi(y,u)|+\sup_{u\in U} \left|\frac{\pi(y,u)}{\pi(x,u)}-1\right||\ln \pi(y,u)|.    
\end{align*}
So let us estimate the right-hand side two terms below.
Because of \eqref{1.4} and \eqref{1.5}, we get 
\begin{align*}
|\ln \pi(x,u)-\ln\pi(y,u)|&=\left|g(x,u)-g(y,u)-\ln \int_U e^{g(x,u')-g(y,u')} \,du'\right|\\
&\leq C\left[1+A_1(\rho^{\frac\alpha2-\frac12}+\eps_1+\rho^\frac{\alpha}{2}\eps_0)\right]|x-y|^\alpha.    
\end{align*}
To estimate $\left|\frac{\pi(y,u)}{\pi(x,u)}-1\right|$, let us assume without loss of generality that $\pi(y,u)\geq \pi(x,u)$. Then since $|g(x,u)-g(y,u)|\leq [g(\cdot, u)]_{0,\alpha}|x-y|^\alpha$ and $|U|=1$, we find
\begin{align*}
\left|\frac{\pi(y,u)}{\pi(x,u)}-1\right|&= \frac{\exp[g(y,u)]\int_U \exp[g(x,u')]\,du'}{\int_U\exp [g(y,u')]\,du' \exp[g(x,u)]}-1\\
&\leq \frac{\exp[g(y,u)]\int_U \exp\left[g(y,u')+[g(\cdot, u)]_{0,\alpha}|x-y|^\alpha\right]\,du'}{\int_U\exp [g(y,u')]\,du' \exp[g(x,u)]}-1 \\
&= {\exp[g(y,u)-g(x,u)]\exp\left[\sup_{u\in U}[g(\cdot, u)]_{0,\alpha}|x-y|^\alpha\right]}-1 \\
%&\leq \exp\left[C|x-y|^\alpha+C\left(1+A_1(\rho^{-\frac12+\frac\alpha2}+\eps_0\rho^\frac{\alpha}{2})\right)|x-y|^\alpha\right]-1\\
&\leq \exp\left[C\left(1+A_1(\rho^{\frac\alpha2-\frac12}+\eps_1+\rho^\frac{\alpha}{2}\eps_0) \right)\right]|x-y|^\alpha,    
\end{align*}
for all $|x-y|\leq 1$.
Finally, combining these with \eqref{1.3}, we obtain
\[
\left|\int_U\left(\pi(x,u)\ln \pi(x,u)-\pi(y,u)\ln \pi(y,u)\right)\,du\right|\leq \exp\left[C\left(1+A_1(\rho^{\frac\alpha2-\frac12}+\eps_1+\rho^\frac{\alpha}{2}\eps_0) \right)\right]|x-y|^\alpha.
\]
\end{proof}

Recall that, by \eqref{eq:PI}, $v^n$ satisfies the linear parabolic equation:
\beq\lb{1.0}
	\rho v^n(x)- {r}^n(x) - {b}^n(x) \cdot {D}  v^n(x)-2^{-1}\tr({\Sigma}^n(x) {D} ^2 v^n(x)) +\lambda {\Hc}^n(x)=0,
\eeq
where $\pi^n$ is given in \eqref{1.11}, and
\beq\lb{1.10}
\begin{aligned}
&{r}^n(x):=\int_U r(x,u)\pi^n(x,u)\,du,\quad &&{b}^n(x):=\int_U b(x,u)\pi^n(x,u)\,du,
\\
&{\Sigma}^n(x):=\int_U \sigma\sigma^T(x,u)\pi^n(x,u)\,du,\quad &&{\Hc}^n(x):=\int_U \ln(\pi^n(x,u))\pi^n(x,u)\,du.    
\end{aligned}
\eeq

\medskip

Assume that conditions \eqref{cond1}, \eqref{cond2} hold and that $|U|=1$, it follows from the proof of Lemma \ref{L.2} that the SDE corresponding to iteration step $n$, given by
$$
	dX^{\pi^n}_t = b^n(X^{\pi^n}_t)dt+\sqrt{\Sigma^n(X^{\pi^n}_t)}\,dW_t
$$
	admits a unique strong solution, provided $\|v^{n-1}\|_{2,\alpha}$ is finite. Then by selecting $\|v^0\|_{2,\alpha}<\infty$ for some $\alpha\in(0,1)$, an induction argument ensures that the above SDE admits a unique strong solution for all $n=1,2,3,\dots$.
	Further, the value function $V^{\pi^n}$ equals to the unique solution $v^n$ of \eqref{eq:PI}. In addition, by taking $\rho$ sufficiently large, the sequence $v^n$ satisfies the following uniform $\Cc^{2,\alpha}$ estimate.

%We can prove the following result.

\begin{Theorem}\lb{T.1.1}
Let $\alpha\in (0,1)$, and assume \eqref{cond1}, \eqref{cond2}, and $|U|=1$. Then there exists  $A_1\geq 1$ depending only on $\|v^0\|_{2,\alpha}$, $C_0$, $\lambda$ and $\eta$ from Lemma \ref{L.1} such that if
\beq\lb{cond3}
\rho\geq A_1^{2/(1-\alpha)},\quad \eps_0 A_1\rho^\frac{\alpha}{2}\leq 1,\quad\text{and}\quad \eps_1 A_1\leq 1,
\eeq
we have for $v^n$ from PIA,
\[
\|v^n\|_{2}\leq A_1\quad\text{and}\quad \rho^{-\frac\alpha2}[v^n]_{2,\alpha}\leq A_1\quad \text{for all $n\geq 1$}.
\]
\end{Theorem}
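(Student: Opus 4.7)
The natural strategy is induction on $n$, but the inductive hypothesis must be stronger than the conclusion of the theorem: we will show that every $v^n$ satisfies the full Schauder-type bound \eqref{1.1} with $A_1$ in place of $\eta(A)$. This strengthening is forced by Lemma~\ref{L.2}, whose bounds on $\pi^n$ require precisely these $\rho$-weighted norms of $v^{n-1}$; and it implies the conclusion of the theorem since $\rho \geq 1$ gives $\|v^n\|_2 \leq (1+\rho^{-1/2}+\rho^{-1})A_1 \leq 3A_1$ and $\rho^{-\alpha/2}[v^n]_{2,\alpha}\leq A_1$, which we absorb by inflating the final constant.

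For the inductive step, suppose $v^{n-1}$ satisfies \eqref{1.1} with $A_1$. Applying Lemma~\ref{L.2} to $v^{n-1}$ yields H\"older bounds on $\pi^n$ and on $\Hc^n$. The point of the three conditions in \eqref{cond3} is that every compound quantity appearing in those bounds collapses to a number depending only on $\lambda$ and $C_0$: the condition $\rho\geq A_1^{2/(1-\alpha)}$ gives both $A_1\rho^{-1/2}\leq 1$ and $A_1\rho^{(\alpha-1)/2}\leq 1$, while $\eps_0 A_1\rho^{\alpha/2}\leq 1$ and $\eps_1 A_1\leq 1$ are assumed directly (the first also yielding $\eps_0 A_1\leq 1$ since $\rho^{\alpha/2}\geq 1$). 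Integrating $r,b,\Sigma$ against $\pi^n$ and invoking \eqref{cond1} together with $|U|=1$, we deduce
\[
\|r^n\|_{0,\alpha}+\|b^n\|_{0,\alpha}+\|\Sigma^n\|_{0,\alpha}+\lambda\|\Hc^n\|_{0,\alpha}\leq A^{\flat}
\]
for some $A^{\flat}=A^{\flat}(\lambda,C_0)$, while the uniform ellipticity $\Sigma^n\geq \I_d/C_0$ is preserved by convex combinations. Lemma~\ref{L.1} applied to the linear equation \eqref{1.0} for $v^n$ then delivers \eqref{1.1} with $\eta(A^{\flat})$ in place of $\eta(A)$, closing the step.

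For the base case $n=1$, the initial datum $v^0$ is only assumed to satisfy $\|v^0\|_{2,\alpha}<\infty$, not \eqref{1.1}. The arithmetic of Lemma~\ref{L.2} still goes through verbatim, with the $\rho$-weighted pieces of \eqref{1.1} replaced by the corresponding pieces of $\|v^0\|_{2,\alpha}$, giving $\|r^1\|_{0,\alpha}+\|b^1\|_{0,\alpha}+\|\Sigma^1\|_{0,\alpha}+\lambda\|\Hc^1\|_{0,\alpha}\leq A^{\star}$ for some $A^{\star}=A^{\star}(\|v^0\|_{2,\alpha},\lambda,C_0)$, and Lemma~\ref{L.1} yields \eqref{1.1} for $v^1$ with $\eta(A^{\star})$. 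Choosing $A_1:=3\max\{\eta(A^{\star}),\eta(A^{\flat})\}$ then closes the induction. The only delicate point, and the real content of the argument, is recognizing that the three smallness conditions in \eqref{cond3} are exactly what is required for the inductive hypothesis to be self-consistent: since $A^{\flat}$ and $A^{\star}$ are independent of $\rho,\eps_0,\eps_1$, the constant $A_1$ is well-defined first, and \eqref{cond3} is then precisely the threshold at which the Lemma~\ref{L.2} bounds stop amplifying through the iteration and the $\Cc^{2,\alpha}$ estimate becomes uniform in $n$.
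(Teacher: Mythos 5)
Your proof is correct and follows essentially the same induction as the paper: strengthen the inductive hypothesis to the full $\rho$-weighted bound \eqref{1.1}, use Lemma~\ref{L.2} to bound the H\"older norms of $r^n,b^n,\Sigma^n,\Hc^n$, observe that \eqref{cond3} collapses all the $A_1$-dependent compound quantities to constants depending only on $\lambda,C_0$, and then invoke Lemma~\ref{L.1}. The only cosmetic difference is bookkeeping: you separate the two thresholds $A^\star$ (base case, governed by $\|v^0\|_{2,\alpha}$) and $A^\flat$ (inductive step, independent of $\|v^0\|_{2,\alpha}$) and explicitly account for the factor $3$ when passing from the weighted bounds in \eqref{1.1} to the stated $\Cc^2$ bound, whereas the paper chooses a single $A$ and absorbs these into the choice of constants.
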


We note that both \(\eps_0\) and \(\eps_1\) are quantifiable from the proof using Schauder estimates, and \(\eps_1\) can be chosen independently of \(\rho\). In other words, \(M(x, u) = \Sigma(x, u) - \Sigma_0(x)\) needs to be small for large \(\rho\), although it is allowed to have certain oscillations independent of \(\rho\).

\begin{proof}
%Set $A= \max\{ C\exp(3C), 9C\}$ and $A_1=\eta(A)$. . Then take $v^0$ satisfying \eqref{1.1} with $v$ replacing by $v^0$, then $v^n$ satisfies \eqref{1.1} for all $n\geq 1$.
Let $A\geq 1$ to be determined  depending only on  $\lambda$ and \eqref{cond1}, and let $A_1= \eta(A)>0$. Since $v^0\in\Cc^{2,\alpha}(\R^d)$, then we can have that the H\"{o}lder norms of $r^1, b^1,\Sigma^1,\Hc^1$ are bounded by $A$, i.e. \eqref{2.1'} holds with $r^1-\lambda\Hc^1,b^1,\Sigma^1$ in place of $\tilde{r},\tilde{b}, \tilde{\Sigma}$. Thus Lemma \ref{L.1} implies that
\[
\rho\|v^{1}\|_{\infty},\; \rho^{1-\frac\alpha2}[v^{1}]_{0,\alpha} ,\; \rho^{\frac12-\frac\alpha2}[v^{1}]_{1,\alpha},\; \rho^{-\frac\alpha2}[v^{1}]_{2,\alpha},\; \rho^\frac12\llbracket v^{1}\rrbracket_1,\; \llbracket v^{1}\rrbracket_2\leq A_1.
\]
Let us assume for induction that for some $n\geq 2$, the following holds
\[
\rho\|v^{n-1}\|_{\infty},\; \rho^{1-\frac\alpha2}[v^{n-1}]_{0,\alpha} ,\; \rho^{\frac12-\frac\alpha2}[v^{n-1}]_{1,\alpha},\; \rho^{-\frac\alpha2}[v^{n-1}]_{2,\alpha},\; \rho^\frac12\llbracket v^{n-1}\rrbracket_1,\; \llbracket v^{n-1}\rrbracket_2\leq A_1.
\]
We now prove all the above inequalities hold with $v^n$ in place of $v^{n-1}$.

It follows from Lemma \ref{L.2} and $|U|=1$ that
\begin{align*}
\|r^n(\cdot)\|_{0,\alpha} &\leq \|r(\cdot)\|_{0,\alpha}+\|r(\cdot)\|_{\infty} \sup_{u\in U}[\pi^n(\cdot,u)]_{0,\alpha}\\
&\leq \left(1+A_1(\rho^{\frac\alpha2-\frac12}+\eps_1+\rho^\frac{\alpha}{2}\eps_0)\right)\exp\left[C(1+A_1(\rho^{-\frac12}+\eps_0 )\right],   
\end{align*}
where $C$ only depending on $\lambda$ and $C_0$ in \eqref{cond1}.
Since $A_1=\eta(A)$, \eqref{cond3} and the above yield
\[
\|r^n(\cdot)\|_{0,\alpha}\leq 4\exp(3C) \leq A/2
\]
if $A$ 
is sufficiently large compared to $C$.
%where $C$ only depends on \eqref{cond1} and $\lambda$. 
For the entropy, Lemma \ref{L.2} and \eqref{cond3} yield
\[
\left\|\lambda\int_U\pi^n(\cdot,u)\ln \pi^n(\cdot,u)\,du\right\|_{0,\alpha}\leq \exp\left[C\left(1+A_1(\rho^{\frac\alpha2-\frac12}+\eps_1+\rho^\frac{\alpha}{2}\eps_0)\right)\right]\leq \exp(4C)\leq A/2
\]
if $A$ is sufficiently large depending only on $C$. Thus, $\|r^n-\lambda\Hc^n\|_{0,\alpha}\leq A$. Similarly, can we get
\[
\| b^n(\cdot)\|_{0,\alpha} ,\quad \|\Sigma^n(\cdot)\|_{0,\alpha}\leq A.
\]
%Thus, there exists $A_0$ we are able to fix $A\geq A_0$, with $A_0$ only depending on the assumptions and $\lambda$.

Finally, we apply Lemma \ref{L.1} to conclude that 
\[
\rho\|v^{n}\|_{\infty},\quad  \rho^{1-\frac\alpha2}[v^{n}]_{0,\alpha},\quad \rho^{\frac12-\frac\alpha2}[v^{n}]_{1,\alpha},\quad \rho^{-\frac\alpha2}[v^{n}]_{2,\alpha},\quad \rho^\frac12\llbracket v^{n}\rrbracket_1,\quad \llbracket v^{n}\rrbracket_2\leq \eta(A)=A_1.
\]
Since the constants $C,A,A_1$ are independent of $n$, the proof is finished by induction.
\end{proof}

\section{Convergence for PIA with bounded coefficients}\label{sec: converge.bounded}

%Let us denote
%\[
%f^n(x,u)=\frac{1}{\lambda}\left(r(x,u)+ b(x,u) \cdot Dv^n+ \frac{1}{2}\tr(\Sigma(x,u) D^2v^n)\right),
%\]

%\[
%	\rho v^n(x)- {r}^n(x) - {b}^n(x) \cdot {D}  v^n(x)-\frac{1}{2}\tr({\Sigma}^n(x) {D} ^2 v^n(x)) +\lambda {\Hc}^n(x)
%\]

\subsection{Convergence for uniform $\Cc^{2,\alpha}$ solutions}

Theorem \ref{T.1.1} has proved that, under certain conditions, $v^n$ from PIA are uniformly $\Cc^{2,\alpha}$. The following theorem shows that such solutions converge to the solution $v^*$ of \eqref{eq:HJB}.

\begin{Theorem}\label{thm:bound.converge}
    Assume \eqref{cond1}. 
    Let $v^*$ solve \eqref{eq:HJB} and let $v^n$ be the solution to \eqref{eq:PI} for $n\in \N$.
    If $v^n$ are uniformly bounded in $\Cc^{2,\alpha}(\R^d)$ for all $n\in \N$,
    then $\lim_{n\to \infty}v^n=v^*$ as $n\to\infty$ locally uniformly in $\Cc^{2,\alpha}(\R^d)$.
\end{Theorem}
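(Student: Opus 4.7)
The plan is to use Arzel\`a--Ascoli to extract subsequential limits from the uniform $\Cc^{2,\alpha}$ bound, identify any such limit as a bounded classical solution of \eqref{eq:HJB} by passing to the limit in the iterative equation \eqref{eq:PI}, and then invoke the comparison principle for \eqref{eq:HJB} to conclude that every subsequential limit equals $v^*$, which forces the whole sequence to converge.

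The hypothesis $\sup_n\|v^n\|_{\Cc^{2,\alpha}(\R^d)}<\infty$ makes $\{v^n\}_n$ precompact in $\Cc^{2,\alpha'}_{\mathrm{loc}}(\R^d)$ for every $\alpha'\in(0,\alpha)$ by Arzel\`a--Ascoli; let $v^\infty\in \Cc^{2,\alpha}(\R^d)$ be any limit along a subsequence $n_k\to\infty$. The explicit exponential formula \eqref{0.1} shows that $\Gamma(x,p,X)(u)$ is smooth in $(p,X)$, so the local uniform convergence of $Dv^{n_k-1}$ and $D^2v^{n_k-1}$ yields
\[
\pi^{n_k}(x,u)\;\to\; \pi^\infty(x,u):=\Gamma(x,Dv^\infty,D^2v^\infty)(u)
\]
locally uniformly in $x$ and uniformly in $u\in U$. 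The uniform pointwise upper and lower bounds for $\pi^{n_k}$ from Lemma~\ref{L.2} then permit dominated convergence in the integrals \eqref{1.10}, producing local uniform convergence of $r^{n_k}, b^{n_k}, \Sigma^{n_k}, \Hc^{n_k}$ to the analogues with $\pi^\infty$ in place of $\pi^{n_k}$.

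Passing to the limit in \eqref{1.0} along $n_k$ gives
\[
\rho v^\infty = F_{\pi^\infty}(x,Dv^\infty,D^2 v^\infty) = F(x,Dv^\infty,D^2 v^\infty),
\]
where the last equality uses the pointwise maximizer property of $\pi^\infty$ established before \eqref{0.3}. Hence $v^\infty$ is a bounded classical solution of \eqref{eq:HJB}. Since $F$ is convex in $(p,X)$, uniformly elliptic by \eqref{cond1}, and has bounded Lipschitz-in-$x$ ingredients, the standard comparison principle for fully nonlinear HJB with positive zero-order term $\rho$ gives uniqueness among bounded solutions on $\R^d$, so $v^\infty=v^*$. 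Because this holds for every subsequential limit, the whole sequence converges to $v^*$ in $\Cc^{2,\alpha'}_{\mathrm{loc}}(\R^d)$ for every $\alpha'<\alpha$. The main technical obstacle I anticipate is transferring convergence of $(Dv^n,D^2v^n)$ to the nonlinear entropy term $\Hc^n=\int_U \pi^n\ln\pi^n\,du$; the uniform pointwise and H\"older bounds from Lemma~\ref{L.2} are engineered precisely for this via dominated convergence.
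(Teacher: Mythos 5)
Your proposal has a genuine gap in the identification of the subsequential limit. You extract a subsequence $n_k$ with $v^{n_k}\to v^\infty$ in $\Cc^{2,\alpha'}_{\mathrm{loc}}$ and then pass to the limit in \eqref{eq:PI}. But the policy $\pi^{n_k}$ in that equation is $\Gamma(x,Dv^{n_k-1},D^2v^{n_k-1})$, determined by the \emph{previous} iterate $v^{n_k-1}$, not by $v^{n_k}$. Nothing you have established forces $v^{n_k-1}$ to converge (even along a further subsequence) to the same limit $v^\infty$; it may converge to some other function $w^\infty$. In that case the limiting equation is
\[
\rho v^\infty - F_{\widehat\pi}(x,Dv^\infty,D^2 v^\infty) = 0,\qquad \widehat\pi:=\Gamma(x,Dw^\infty,D^2 w^\infty),
\]
which only shows $v^\infty$ is a \emph{subsolution} of \eqref{eq:HJB} (since $F\ge F_{\widehat\pi}$), not a solution. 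The crucial step $F_{\pi^\infty}(x,Dv^\infty,D^2v^\infty)=F(x,Dv^\infty,D^2v^\infty)$ uses that $\pi^\infty$ is the maximizer at $(Dv^\infty,D^2v^\infty)$, which holds only if $w^\infty=v^\infty$, and that is exactly what is not justified.

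The paper closes this loop by first proving the sequence is \emph{monotone}: since $\pi^{n+1}$ maximizes $F_\pi(x,Dv^n,D^2v^n)$, the iterative equation gives $\rho v^n - F_{\pi^{n+1}}(x,Dv^n,D^2v^n)\le 0$, so $v^n$ is a subsolution of the equation satisfied by $v^{n+1}$ (and of \eqref{eq:HJB}), whence $v^n\le v^{n+1}\le v^*$ by comparison. Monotone convergence then identifies a single pointwise limit $\bar v$ for the \emph{entire} sequence, so $v^{n-1}$ and $v^n$ share the limit, and the subsequent stability and comparison arguments (which you also outline) go through. To repair your proof, insert the monotonicity observation before invoking Arzel\`a--Ascoli; then the whole sequence converges and the policy $\pi^n$ converges to $\Gamma(x,D\bar v,D^2\bar v)$ as needed. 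With that fix, the rest of your argument — smoothness of $\Gamma$, uniform positivity bounds from Lemma~\ref{L.2} to control the entropy term, stability, and comparison — matches the paper's route.
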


\begin{proof}
Since $\pi^{n+1}$ is a maximizer for $F_\pi(x,Dv^n,D^2v^n)$ for each $n\geq 0$, it follows from \eqref{0.2} and the equation \eqref{eq:PI} that
\[
\rho v^n-F(x,Dv^n,D^2v^n)=\rho v^n-F_{\pi^{n+1}}(x,Dv^n,D^2v^n)\leq 0.
\]
Thus, the comparison principle yields 
\[
v^n\leq v^{n+1}\leq v^*.
\]
We can then take $\bar{v}:=\lim_{n\to \infty}v^n$. Since $v^n$ are uniformly bounded in $\Cc^{2,\alpha}(\R^d)$ for all $n$, we get that $v^n\to\bar v$ as $n\to\infty$ locally uniformly in $\Cc^{2,\alpha}(\R^d)$, and $\bar v\in \Cc^{2,\alpha}(\R^d)$. 

Note that, for each $u\in U$,
\[
\pi^n(x,u)=\Gamma(x,Dv^n,D^2v^n)(u)\to \Gamma(x,D\bar v,D^2\bar v)(u)\quad\text{ locally uniformly}.
\]
By the equations of $v^n$ and the stability of viscosity solutions (under locally uniform convergence), we get that $\bar{v}$ is a viscosity 
to
\begin{align*}
\rho v- \int_U \big[r(x,u)+& b(x,u) \cdot Dv+ 2^{-1}\tr(\sigma\sigma^T(x) D^2 v)\\
&-\lambda \ln(\Gamma(x,Dv,D^2v)(u))\big] \Gamma(x,Dv,D^2v)(u)\,du =0.  
\end{align*}
In view of the definition of $\Gamma$ in \eqref{0.1}, this shows that $\bar{v}$ is a solution to \eqref{eq:HJB}. By the comparison principle again, we have $\bar{v}=v^*$, which finishes the proof.
\end{proof}

\subsection{Quantitative convergence results}
In this subsection, we estimate the convergence rate of $v^n$ to $v^*$, allowing some errors. 
%Let $v^n$ and $v^*$ be the value for $n$-th iteration and the optimal value, respectively. Namely, $v^n$ is the unique solution to \eqref{1.0}, and $v^*$ is the unique solution to \eqref{eq:HJB}. 
Let $\pi^*(x,u)=\Gamma(x, D v^*, D^2v^*)(u)$ be the unique optimal feedback control, and let
\beq\lb{3.6}
\begin{aligned}
&{r}^*(x)=\int_U r(x,u)\pi^*(x,u)\,du,\quad &&{b}^*(x)=\int_U b(x,u)\pi^*(x,u)\,du,\\
&{\Sigma}^*(x)=\int_U \sigma\sigma^T(x,u)\pi^*(x,u)\,du,\quad &&{\Hc}^*(x)=\int_U \ln(\pi^*(x,u))\pi^*(x,u)\,du,    
\end{aligned}
\eeq
and then \eqref{eq:HJB} is reduced to
 \beq\lb{2.1}
 \rho v^*(x)-r^*(x)-b^*(x)\cdot  Dv^*(x)-2^{-1}\tr(\Sigma^*(x) D^2 v^*(x))+\lambda\Hc^*(x)=0.
 \eeq

%\begin{Lemma}
%\label{lm:1}
%There exits $C$ such that
%\be  
%|b^*-b^n|, |r^*-r^n|, |M^*-M^n|, |\Hc^*-\Hc^n|\leq C\left(  |D \bar v^n| +\eps_0|D^n \bar v^n| \right).
%\ee 
%here $C$ is independent of $\rho$ but depends on other parameters such as $\lambda$, $\Leb(U)$ etc.
%\end{Lemma}
%Through the following argument, $C$ is a constant varies and is independent of $\rho$.

\begin{Theorem}\lb{T.2.1}
Assume \eqref{cond1}, \eqref{cond2} for some $C_0\geq 1$, and $\alpha,\eps_0,\eps_1\in (0,1)$. Assume $|U|=1$, and that %Under the assumptions of Theorem \ref{T.1.1} and also assuming that 
$\Sigma_0(\cdot)$ is Lipschitz continuous. Then there exists $A_1\geq 1$ such that if \eqref{cond3} holds, we have for all $n\geq 1$ and $x_0\in \R^n$,
\[
\rho\int_{B_1(x_0)} | v^n-v^*|^2 \,dx+\int_{B_1(x_0)} |D (v^n-v^*)|^2\, dx \leq C2^{-n}+C'\eps_0^2/\rho,
\]
where $C>0$ depends only on $d$ and $\|v^0\|_{2}+\|v^*\|_{2}$, and $A_1,C'>0$ also depend on $\lambda,C_0$, and $\eta$.
As a consequence, we have for all $n\geq 1$,
\[
\|D(v^n-v^*)\|_\infty\leq (C2^{-n}+C'\eps_0^2/\rho)^{\frac{1}{d+2}}
\]
and
\[
\|v^n-v^*\|_\infty\leq \rho^{-\frac{1}{d+2}}(C2^{-n}+C'
\eps_0^2/\rho)^{\frac{1}{d+2}+\frac{d}{(d+2)^2}}.
\]   
\end{Theorem}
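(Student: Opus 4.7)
The plan is to derive a pointwise one-step contraction for the non-negative difference $w^n := v^*-v^n$ by exploiting the Gibbs structure of the iterated policies and the uniform $\Cc^2$ bound from Theorem~\ref{T.1.1}, and then to deduce the $L^2$ bound and its $L^\infty$ consequences.

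First, I would subtract \eqref{2.1} from \eqref{1.0} and add and subtract the mixed term $F_{\pi^{n+1}}(x,Dv^*,D^2v^*)$. Using that $\pi^*$ is the Gibbs maximizer of $F_\pi(x,Dv^*,D^2v^*)$, the standard log-partition identity for softmax densities gives exactly
\[
\rho w^{n+1} - b^{n+1}\cdot Dw^{n+1} - \tfrac12\tr(\Sigma^{n+1} D^2 w^{n+1}) = \lambda K^{n+1}(x),
\]
where $K^{n+1}(x):=\int_U \pi^{n+1}\ln(\pi^{n+1}/\pi^*)\,du\ge 0$ is the relative entropy of $\pi^{n+1}(x,\cdot)$ with respect to $\pi^*(x,\cdot)$. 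Writing the log-ratio as $\psi(x,u)-\ln\E_{\pi^*}[e^\psi]$ with $\psi(x,u)=-\lambda^{-1}\bigl[b(x,u)\cdot Dw^n+\tfrac12\tr(\sigma\sigma^T(x,u)D^2w^n)\bigr]$, and noting that only the $u$-oscillation of $\psi$ survives in the KL (so the $u$-independent $\Sigma_0$-part of $\sigma\sigma^T=\Sigma_0+M$ drops out), the oscillation is bounded by $C(|Dw^n|+\eps_0|D^2w^n|)$. The standard variance-type bound $\operatorname{KL}(\pi\|\pi')\le e^{\|\eps\|_\infty}\|\eps\|_\infty^2$ (valid for small log-ratios, ensured here by Theorem~\ref{T.1.1} and \eqref{cond3}) then yields the pointwise estimate
\[
K^{n+1}(x)\le C\bigl(|Dw^n(x)|^2+\eps_0^2|D^2w^n(x)|^2\bigr).
\]

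Using $\|D^2w^n\|_\infty\le 2A_1$ from Theorem~\ref{T.1.1} together with the elementary interpolation $\|Dw^n\|_\infty^2\le C\|w^n\|_\infty\|D^2w^n\|_\infty$ on $\R^d$, this turns into $\|K^{n+1}\|_\infty\le CA_1\|w^n\|_\infty+CA_1^2\eps_0^2$. The maximum principle applied to the bounded linear elliptic operator $\rho I-b^{n+1}\cdot D-\tfrac12\tr(\Sigma^{n+1}D^2)$ on $\R^d$ then gives $\|w^{n+1}\|_\infty\le\lambda\|K^{n+1}\|_\infty/\rho$, producing for $\rho$ suitably large the one-step contraction $\|w^{n+1}\|_\infty\le\tfrac12\|w^n\|_\infty+C\eps_0^2/\rho$. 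Iterating from $\|w^0\|_\infty\le\|v^0\|_\infty+\|v^*\|_\infty\lesssim \|v^0\|_2+\|v^*\|_2$ yields $\|w^n\|_\infty\le C2^{-n}+C\eps_0^2/\rho$. The stated $L^2$ estimate then follows by combining this with the trivial bound $\|w^n\|_\infty\le 2A_1/\rho$ from Theorem~\ref{T.1.1} via the geometric-mean inequality $\min(\rho\,2^{-2n},\,1/\rho)\le 2^{-n}$, giving $\rho\|w^n\|_\infty^2\le C2^{-n}+C\eps_0^2/\rho$ pointwise and, by the same interpolation, the analogous bound for $\|Dw^n\|_\infty^2$; integrating over $B_1(x_0)$ yields the claim uniformly in $x_0$. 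The two $L^\infty$ consequences come from the interior Gagliardo--Nirenberg inequality $\|f\|_\infty\lesssim\|Df\|_\infty^{d/(d+2)}\|f\|_{L^2(B_1)}^{2/(d+2)}$, applied in turn to $f=Dw^n$ and $f=w^n$, and the exponent arithmetic $\tfrac{1}{d+2}+\tfrac{d}{(d+2)^2}$ for $\|w^n\|_\infty$ falls out exactly as stated.

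The principal obstacle is the pointwise KL bound: without the smallness condition \eqref{cond2}, the second-order term in $K^{n+1}$ would appear with an $O(1)$ rather than $O(\eps_0^2)$ coefficient, and since Theorem~\ref{T.1.1} only provides uniform (not small) bounds on $\|D^2w^n\|_\infty$, the iteration would fail to contract. The decomposition $\Sigma=\Sigma_0+M$ with small $M$, together with the cancellation of the $u$-independent $\Sigma_0$-piece inside the Gibbs log-ratio, is precisely what renders the scheme quantitatively contractive.
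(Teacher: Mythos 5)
Your proposal is correct but takes a genuinely different route from the paper. You replace the paper's weighted $L^2$ energy estimate by a pointwise relative-entropy identity plus the $L^\infty$ comparison principle. Specifically, the add-and-subtract of $F_{\pi^{n+1}}(x,Dv^*,D^2v^*)$ and the Gibbs variational identity
\[
F_{\pi^*}(x,p,X)-F_{\pi}(x,p,X)=\lambda\,\mathrm{KL}(\pi\,\|\,\pi^*)
\]
for $\pi^*=\Gamma(x,p,X)$ give the exact equation
$\rho w^{n+1}-b^{n+1}\cdot Dw^{n+1}-\tfrac12\tr(\Sigma^{n+1}D^2 w^{n+1})=\lambda K^{n+1}\ge 0$
with $w^{n+1}=v^*-v^{n+1}\ge 0$. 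The paper never writes this identity; it works instead with a pointwise \emph{inequality} $\rho\bar v^n-\tfrac12\tr(\Sigma_0 D^2\bar v^n)\le \Ec_{n-1}+\Ec_n$ with $\Sigma_0$ (chosen Lipschitz) on the left so that it can multiply by $\bar v^n\phi$, integrate by parts, and run a weighted Caccioppoli iteration on $\int|D\bar v^n|^2\phi$. Your route has two concrete advantages: (i) the Lipschitz continuity of $\Sigma_0$ is never used (it only survives in your argument as part of the theorem's hypotheses), since you never integrate by parts against it; (ii) the quadratic KL bound $K^{n+1}\lesssim (|Dw^n|+\eps_0|D^2w^n|)^2$ together with Landau's interpolation $\|Dw^n\|_\infty^2\lesssim\|w^n\|_\infty\|D^2 w^n\|_\infty$ yields a direct one-step $L^\infty$ contraction $\|w^{n+1}\|_\infty\le\tfrac12\|w^n\|_\infty+C\eps_0^2/\rho$, hence a stronger intermediate bound $\|w^n\|_\infty\lesssim 2^{-n}+\eps_0^2/\rho$ than the paper's final $L^\infty$ estimate. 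Both approaches crucially exploit the $\Sigma_0+M$ decomposition to get $\eps_0$ (not $O(1)$) in front of the Hessian term, as you note. The only small loss relative to the paper: when you convert your $L^\infty$ contraction into the stated $L^2$ form $\rho\|w^n\|^2_{L^2(B_1)}+\|Dw^n\|^2_{L^2(B_1)}\le C2^{-n}+C'\eps_0^2/\rho$, multiplying by the trivial bound $\|w^n\|_\infty\le 2A_1/\rho$ makes your $C$ pick up a factor of $A_1$, whereas the paper's $L^2$ iteration gives a $C$ depending only on $d$ and $\|v^0\|_2+\|v^*\|_2$ (the contraction factor absorbs the $A_1$-dependence into the requirement on $\rho$ and the base case $\int|D\bar v^0|^2\phi$ is $A_1$-free). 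This is a cosmetic weakening of the constant dependence, not a gap in the argument.
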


\begin{Remark}\label{rem:quant}
If $\sigma$ is independent of $u$, then $\eps_0=\eps_1=0$, so we obtain the exponential convergence rate for PIA as a corollary of the result. Also, we remark that the smallness of $\eps_1$ is only used to guarantee that $v^n\in \Cc^{2,\alpha}(\R^d)$ uniformly for all $n\geq 1$.
%The following comments are in order.
%\begin{itemize}
%    \item[1.] 
%    \item[2.] 
%\|v^n-v^*(x_0)\|_{\infty}\leq C\rho^{-\frac{1}{d+2}}(2^{-n}+\eps_0^2/\rho)^{\frac{1}{d+2}+\frac{d}{(d+2)^2}}.
%\end{itemize}   
\end{Remark}

\begin{proof}
Without loss of generality, let us assume that $x_0=0$. The proof will be divided into three steps.

{\bf Step 1.} 
Recall that, under the assumptions, $v^n$ and $v^*$ are uniformly bounded in $\Cc^{2,\alpha}(\R^d)$ for all $n\geq1$. Denote $M(x,u)=\Sigma(x,u)-\Sigma_0(x)$ and then we have
\[
|M(x,u)|\leq \eps_0 \qquad\text{for all }x,u.
\]
As done before, if we write
\[
g^n(x,u):=\frac{1}{\lambda}\left(r(x,u)+ b(x,u) \cdot Dv^n+ \frac{1}{2}\tr(M(x,u)D^2v^n)\right),
\]
\[
g^*(x,u):=\frac{1}{\lambda}\left(r(x,u)+ b(x,u) \cdot Dv^*+ \frac{1}{2}\tr(M(x,u)D^2v^*)\right),
\]
then
\[
\pi^n(x,u)=\frac{\exp(g^{n-1}(x,u))}{\int_U\exp(g^{n-1}(x,u'))\,du'},\qquad \pi^*(x,u)=\frac{\exp(g^*(x,u))}{\int_U\exp(g^*(x,u'))\,du'}.
\]
In view of \eqref{1.0} and \eqref{2.1}, and setting $\bar v^n:=v^*-v^n$, we obtain
\beq\lb{2.5}
\rho \bar v^n(x)-\frac{1}{2}\tr(\Sigma_0(x) D^2 \bar v^n)= \lambda\int_U \left( g^*(x,u)\pi^*(x,u)-g^n(x,u)\pi^n(x,u) \right)du-\lambda(\Hc^*(x)-\Hc^n(x)). 
\eeq

It follows from the proof of \eqref{1.2} and the assumption of \eqref{cond3} that $|g^n|,|g^*|\leq C$ uniformly for all $n\geq 1$. 
By \eqref{1.11} and the uniform regularity of $v^n,v^*$, we get
\[
|\pi^n(x,u)- \pi^*(x,u)|\leq C\left(  |D \bar v^{n-1}| +\eps_0|D^2 \bar v^{n-1}| \right).
\]
Let us remark that $\eps_1$ is not needed here.
For simplicity of notation, we denote 
\[
\Ec_{n-1}: =\Ec_{n-1} (x)=C\left(  |D \bar v^{n-1}| +\eps_0|D^2 \bar v^{n-1}| \right)(x)
\]
with possibly different constant $C$ in $\Ec_{n}$ from one line to another. Recall \eqref{1.10} and \eqref{1.11}.
By the assumptions, it follows that
\[  
|b^*-b^n|,\, |r^*-r^n|,\, |M^*-M^n|\leq \Ec_{n-1} .%C\left(  |D \bar v^n| +\eps_0|D^n \bar v^n| \right).
\]
Thus, there are $C$ and $\Ec_{n-1} $ such that for all $u\in U$, $\left|\pi^*(x,u)-\pi^n(x,u)  \right|\leq \Ec_{n-1} $. Then
\begin{align*}
&\left| g^*(x,u)\pi^*(x,u)-g^n(x,u)\pi^n(x,u)  \right|
\\
&\qquad\leq |g^*(x,u)|\left| \pi^*(x,u)-\pi^n(x,u)  \right|+|\pi^n(x,u)|\left| g^*(x,u)-g^n(x,u)  \right|\leq \Ec_{n-1} +\Ec_{n} .    
\end{align*}

To estimate $|\Hc^n-\Hc^*|$, note that
\begin{align*}
\left|\ln \pi^n(x,u)-\ln\pi^*(x,u)\right|=\left|g^{n-1}(x,u)-g^*(x,u)-\ln \int_U e^{g^{n-1}(x,u')-g^*(x,u')} \,du'\right|\leq \Ec_{n-1} , %\\&\leq C\left(  |D \bar v^n| +\eps_0|D^n \bar v^n| \right),
\end{align*}
and, since $\pi^n$ is strictly positive by the uniform regularity of $v^n$,
\begin{align*}
\left|\frac{\pi^*(x,u)}{\pi^n(x,u)}-1\right|&\leq C|{\pi^*(x,u)}-{\pi^n(x,u)}|\leq \Ec_{n-1} .
\end{align*}
Thus, we obtain
\begin{align*}
|\Hc^n(x)-\Hc^*(x)|&\leq \left|\int_U \pi^n(x,u)\ln \pi^n(x,u)\,du-\int_U \pi^*(x,u)\ln \pi^*(x,u)\,du \right|\\
&\leq \sup_{u\in U}|\ln \pi^n(x,u)-\ln\pi^*(x,u)|+\sup_{u\in U} \left|\frac{\pi^*(x,u)}{\pi^n(x,u)}-1\right||\ln \pi^*(x,u)|\leq\Ec_{n-1} .    
\end{align*}
Putting these into \eqref{2.5} yields 
\beq\lb{2.5'}
\rho \bar v^n(x)-2^{-1}\tr(\Sigma_0(x) D^2 \bar v^n)\leq \Ec_{n-1} +\Ec_{n} 
\eeq
for some $C$ depending only on the assumptions.

\medskip

{\bf Step 2.} 
Next, let $\phi\in [0,1]$ be a smooth function on $\R^n$ such that for some $C>0$,
\[
\phi(\cdot)\equiv 1\text{  on  }B_1,\quad |D\phi(x)|\leq C\phi(x)\text{ for all }x\in\R^d,\quad \int_{\R^d}\phi(x) \,dx<\infty.
\]
Such $\phi$ exists as one can take a smooth version of $\min\{1, e^{2-|x|}\}$. %Below, for any fixed $x_0\in\R^d$, we use $\phi(\cdot-x_0)$, and, for simplicity, we still denote it as $\phi$.

From \eqref{2.5'}, multiply $\bar{v}^n\phi$ on both sides of  \eqref{2.5} and integrate over $\R^n$ to get
\beq\lb{2.9}
\rho\int_{\R^d} |\bar v^n|^2\phi\, dx-\frac{1}{2}\int_{\R^d} \tr(\Sigma_0 D^2 \bar v^n) \bar v^n\phi\, dx\leq \int_{\R^d} (\Ec_{n-1}+\Ec_{n})\bar v^n\phi\,dx.
\eeq
Since $\Sigma_0=\Sigma_0(x)$ is uniformly Lipschitz continuous and uniformly elliptic, direct computation yields
\begin{align*}
-\int_{\R^d} &\tr(\Sigma_0 D^2 \bar v^n) \bar v^n\phi\, dx =\sum_{1\leq i,j\leq d}\int_{\R^d} \partial_{x_j}(\Sigma_0\bar v^n\phi)_{ij}\partial_{x_i}\bar{v}^n \,dx\\
&\quad =\sum_{1\leq i,j\leq d}\int_{\R^d}\partial_{x_j}\left((\Sigma_0)_{ij}\phi\right)\bar{v}^n\partial_{x_i}\bar{v}^n \,dx+\sum_{1\leq i,j\leq d}\int_{\R^d}\partial_{x_j}\bar v^n (\Sigma_0)_{ij}\phi\partial_{x_i}\bar{v}^n \,dx\\
&\quad\geq c \int_{\R^d}|D \bar v^n|^2\phi\,dx -C \int_{\R^d}|\bar v^n||D \bar v^n|\phi\,dx\geq c\int_{\R^d} |D \bar v^n|^2\phi\,dx -C\int_{\R^d} |\bar v^n|^2\phi\,dx.
\end{align*}
In the first inequality, we used that $|D\phi|\leq C\phi$ and uniform ellipticity; and we applied Young's inequality in the second inequality. The positive constants $c,C$ depend only on $\phi$ and the $\Cc^1$ norm of $\Sigma_0$.
With this, \eqref{2.9} implies that for some positive constants $C_1,c_1,C_2>0$,
\[
\rho\int_{\R^d} |\bar v^n|^2\phi\, dx +c_1\int_{\R^d} |D\bar v^n|^2\phi\, dx
\leq C_2\int_{\R^d} |\bar v^n|^2\phi\,dx+C_1\int_{\R^d}\left( \Ec_{n-1}+\Ec_{n}\right)|\bar v^n|\phi \,dx.
\]
By Young's inequality and that $|D^2 \bar v^{n-1}|,|D^2 \bar v^{n}|\leq C$, we obtain
\[
\begin{aligned}
&(\rho-C_2)\int_{\R^d} |\bar v^n|^2\phi\, dx +c_1\int_{\R^d} |D\bar v^n|^2\phi\, dx\\
&\quad\leq C_1\int_{\R^d} (|D\bar v^{n-1}|+ |D\bar v^{n}|)|\bar v^n|\phi\,dx+CC_1\eps_0\int_{\R^d}|\bar v^n|\phi \,dx\\
&\quad\leq  \frac{c_1}{2}\int_{\R^d} |D\bar v^{n}|^2\phi\, dx+\frac{c_1}{4}\int_{\R^d} |D\bar v^{n-1}|^2\phi\, dx+\frac{3C_1^2}{2c_1}\int_{\R^d} |\bar v^n|^2\phi \, dx+C_1'\eps_0 \left(\int_{\R^d} |\bar{v}^n|^2\phi\,dx\right)^{1/2} 
\end{aligned}
\]
with $C_1'=CC_1$.
Let us assume $\rho\geq 2C_2+{3C_1^2}/{c_1}$, and so
\beq\lb{2.10}
\begin{aligned}
{\rho}\int_{\R^d} |\bar v^n|^2\phi\, dx +c_1\int_{\R^d} |D\bar v^n|^2\phi\, dx
\leq  \frac{c_1}{2}\int_{\R^d} |D\bar v^{n-1}|^2\phi\, dx+2C_1'\eps_0 \left(\int_{\R^d} |\bar{v}^n|^2\phi\,dx\right)^{1/2}.    
\end{aligned}
\eeq

If $\int_{\R^d} |\bar v^n|^2\phi\, dx \geq (\frac{2C_1'\eps_0}{\rho})^2$, \eqref{2.10} is reduced to
\[
\int_{\R^d} |D\bar v^n|^2\phi\, dx
\leq  \frac{1}{2}\int_{\R^d} |D\bar v^{n-1}|^2\phi\, dx.    
\]
Otherwise, $\int_{\R^d} |\bar v^n|^2\phi\, dx \leq (\frac{2C_1'\eps_0}{\rho})^2$, so for both cases we have
\[
\int_{\R^d} |D\bar v^n|^2\phi\, dx
\leq  \frac{1}{2}\int_{\R^d} |D\bar v^{n-1}|^2\phi\, dx+\frac{(2C_1'\eps_0)^2}{\rho c_1}.
\]
It follows that
\[
\begin{aligned}
\int_{\R^d} |D\bar v^{n}|^2\phi\, dx
&\leq  \frac{1}{2}\int_{\R^d} |D\bar v^{n-1}|^2\phi\, dx+\frac{(2C_1'\eps_0)^2}{\rho c_1}\leq  \frac{1}{4}\int_{\R^d} |D\bar v^{n-2}|^2\phi\, dx+(1+\frac12)\frac{(2C_1'\eps_0)^2}{\rho c_1}\\
& \leq \ldots \leq 2^{-n}\int_{\R^d} |D\bar v^{0}|^2\phi\, dx+\frac{8(C_1'\eps_0)^2}{\rho c_1}.
\end{aligned}
\]
Since $\|v^0\|_{2},\|v^*\|_{2}<\infty$ and $\int_{\R^d} |\phi(x)|dx<\infty$, we proved that there exist $C_3$ depending only on $d$ and $\|v^0\|_{2}+\|v^*\|_{2}$, and $C_4$ depending on $\lambda,C_0$, and $\eta$ such that for all $n\geq 1$,
\beq\lb{2.11}
\int_{\R^d} |D\bar v^{n}|^2\phi\, dx\leq C_32^{-n}+C_4\eps_0^2/\rho.
\eeq

{\bf Step 3.} Finally, it follows from \eqref{2.10} that
\[
{\rho}\int_{\R^d} |\bar v^n|^2\phi\, dx 
\leq  c_1C_32^{-n}+c_1C_4\eps_0^2/\rho +2C_1'\eps_0 \left(\int_{\R^d} |\bar{v}^n|^2\phi\,dx\right)^{1/2}.
\]
This shows that
\[
\rho\int_{\R^d} |\bar v^n|^2\phi\, dx\leq \max\left\{2c_1\left(C_32^{-n}+C_4{\eps_0^2}/\rho\right),\, (4C_1'\eps_0)^2/\rho \right\},
\]
which, combining with \eqref{2.11} and the fact that $\phi=1$ in $B_1$, finishes the proof of the first claim.

Because $\sup_{n}\|v^n-v^*\|_{2,\alpha}<\infty$, if $|D(v^n-v^*)|(0)=\delta>0$ for some $\delta>0$, then $|D(v^n-v^*)|\geq\frac\delta2$ in $B_{c\delta}$ for some $c>0$. Therefore $\|D(v^n-v^*)\|^2_{L^2(B_1)}\leq (C2^{-n}+C'\eps_0^2/\rho)$ from the first part of the theorem yields
\[
|D(v^n-v^*)|(0)\leq (C2^{-n}+C'\eps_0^2/\rho)^{\frac{1}{d+2}}=:\eps_2.
\]
With this, if $|v^n-v^*|(0)=\delta'>0$ for some $\delta'>0$, then $|v^n-v^*|\geq\frac{\delta'}{2}$ in $B_{c\delta'/\eps_2}\cap B_1$ for some $c>0$. We obtain the following pointwise estimate:
\[
|v^n(0)-v^*(0)|\leq \rho^{-\frac{1}{d+2}}(C2^{-n}+C'\eps_0^2/\rho)^{\frac{1}{d+2}+\frac{d}{(d+2)^2}}.
\]   
After shifting the solutions, these finish the proof.

\end{proof}

\section{Unbounded degenerate elliptic equations}\label{sec:unbound.nosigma}

In this section, we study degenerate elliptic equations, in which the coefficients might be unbounded in $\bbR^d$. The well-posedness results established here are of independent interest and will be applied later in Section \ref{subsec:unbound.convergence} to analyze the convergence of PIA in the second scenario.

Recall that, by \eqref{0.3},
$v^*(x)=\sup_{\pi\in \Pc(U)}V^\pi(x)$ is a viscosity solution to
\[
\rho v-F(x,Dv,D^2v)=0=\rho v-F_{\pi^*}(x,Dv,D^2v)\quad\text{ with }\pi^*=\Gamma(x,Dv^*,D^2v^*).
\] 
Plugging in the definition of $\Gamma$ in \eqref{0.1} into the definition of $F_\pi$ in \eqref{eq:Fpi} yields the equation
\begin{equation}
\label{4.0}
\begin{aligned}
\rho v-\lambda \ln \int_{U} \exp\bigg[\frac{1}{\lambda} \bigg(r(x,u) + b(x,u) \cdot D v
+ \frac12\tr(\Sigma(x,u) D^2v)\bigg) \bigg] d u= 0.
\end{aligned}
\end{equation}
In this section, we study \eqref{4.0} and only assume $\Sigma\geq 0$. 
If $r,b$, and $\Sigma$ are independent of $u$, the equation becomes a linear equation, which includes \eqref{eq:PI}.

%\subsection{Hypotheses on coefficients}

Recall $|{U}|=1$ and $\Sigma(x,u)=\sigma(x,u)\sigma(x,u)^T$.
We make the following assumptions:
\beq\lb{c.1}
\text{$r(x,u),\, b(x,u),\, \sigma(x,u)$ are locally uniformly Lipschitz continuous in $x$, }
\eeq
and
there exist $N>0$ and $A_0,A_1\geq 1$ such that for all $(x,u)\in \R^d\times{U}$,
\beq\lb{c.3}
 |r(x, u)|\leq A_1 (1+|x|^{N}),\,\,|b(x,u)|\leq A_2(1+|x|),\,\,|\sigma(x,u)|\leq A_3(1+|x|),
\eeq
where, for $X=(x_{ij})$ a vector or a matrix, $|X|:=\max_{i,j}|x_{ij}|$.
%(here if $X$ is a square matrix, $|X|$ denotes the spectral norm) and 
%{Since we have already used $r$ in $r(x,u)$, it is better to use $R$ for the radius of the balls.}

%For any given $\pi\in\Pc(U)$, recall $F$ and $F_\pi$ from  and \eqref{eq:Fpi}, respectively. Below we study both the linear equation

%Let us define $F_\lambda,F: \Sc^d \times \bbR^d\times\bbR\times\bbR^d \to \mathbb{R}$ to be the operators in the right-hand side of \eqref{1.1} and \eqref{1.2}, respectively, and so that we can rewrite the equations as
%\[
%F_\lambda(D^2u_\lambda,Du_\lambda,u_\lambda,x)=0
%\quad\text{and}\quad F(D^2u,Du,u,x)=0.
%\]
%For any square matrix $X$, let $|X|$ denote the spectral norm.

\subsection{Existence and uniqueness}\label{subsec:unbound.unique}

We start with a comparison principle in bounded domains. Throughout this section, we allow degenerate diffusion, that is, we only require $\Sigma \geq 0$.
%The argument is modified from \cite{user}.

\begin{Lemma}\lb{T.1.2}
Let $\Omega\subseteq\bbR^d$ be a bounded open set, and assume \eqref{c.1}.
Let $\mu$ (resp. $v$) be a bounded subsolution (resp. supersolution) to 
\beq\lb{4.1}
\rho \mu-F(x,D\mu,D^2\mu)=0 \quad\text{ in $\Omega$,}
\eeq
such that
\[
\sup_{x\in\partial\Omega}(\mu(x)-v(x))\leq 0. 
\]
Then $\mu\leq v$ in $\Omega$. %The same result holds if we replace $F$ by $F_\pi$, with the extra assumption that $|\lambda \Hc(x)|\leq A_0(1+|x|^N)$. %The same result holds if we replace \eqref{1.1} by \eqref{1.2}.
\end{Lemma}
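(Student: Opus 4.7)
My plan is to use the standard Crandall--Ishii--Lions doubling-of-variables technique for viscosity solutions of second-order degenerate elliptic equations. I would argue by contradiction: assume $\delta := \sup_{\Omega}(\mu - v) > 0$. Since $\mu, v$ are bounded and $\mu \leq v$ on $\partial\Omega$, a positive maximum is attained in the interior. For large $\alpha > 0$, I would consider the doubled functional $\Phi_\alpha(x,y) := \mu(x) - v(y) - \tfrac{\alpha}{2}|x-y|^2$ on $\bar\Omega\times\bar\Omega$ with maximizer $(x_\alpha, y_\alpha)$, and invoke the standard facts that $(x_\alpha, y_\alpha)\to(\bar x,\bar x)$ for some interior $\bar x\in\Omega$, $\alpha|x_\alpha-y_\alpha|^2\to 0$, and $\mu(x_\alpha)-v(y_\alpha)\to\delta$.

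Next, I would apply the Crandall--Ishii lemma to produce symmetric $X,Y\in\Sc^d$ with $(p_\alpha, X)\in\bar J^{2,+}\mu(x_\alpha)$, $(p_\alpha, Y)\in\bar J^{2,-}v(y_\alpha)$, where $p_\alpha = \alpha(x_\alpha - y_\alpha)$, satisfying the matrix inequality
\[
\begin{pmatrix} X & 0 \\ 0 & -Y\end{pmatrix} \leq 3\alpha \begin{pmatrix} I & -I \\ -I & I \end{pmatrix}.
\]
The sub/supersolution definitions then give $\rho(\mu(x_\alpha)-v(y_\alpha)) \leq F(x_\alpha, p_\alpha, X) - F(y_\alpha, p_\alpha, Y)$. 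Using that $F = \sup_\pi F_\pi$, together with the elementary bound $\sup_\pi A_\pi - \sup_\pi B_\pi \leq \sup_\pi(A_\pi - B_\pi)$, it suffices to control $F_\pi(x_\alpha,p_\alpha,X) - F_\pi(y_\alpha,p_\alpha,Y)$ uniformly in $\pi\in\Pc(U)$. Note the entropy term $-\lambda\ln\pi(u)$ depends on $\pi$ but not on $(x,p,X)$, so it cancels.

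Inside the remaining integrand, the $r$-contribution is dominated by $L|x_\alpha-y_\alpha|\to 0$, and the drift contribution by $|b(x_\alpha,u)-b(y_\alpha,u)||p_\alpha| \leq L\alpha|x_\alpha-y_\alpha|^2\to 0$, both uniformly in $u$ thanks to the locally uniform Lipschitz hypothesis \eqref{c.1} (the points $x_\alpha, y_\alpha$ remain in a compact set). The anticipated main obstacle is the second-order term, where a naive bound in the matrix norms of $X,Y$ would blow up with $\alpha$. The standard remedy is to write $\Sigma(\cdot,u)=\sigma\sigma^T$ and split
\[
\tr(\Sigma(x_\alpha,u)X) - \tr(\Sigma(y_\alpha,u)Y) = \sum_{k=1}^{m}\bigl[\langle X\sigma_k(x_\alpha,u),\sigma_k(x_\alpha,u)\rangle - \langle Y\sigma_k(y_\alpha,u),\sigma_k(y_\alpha,u)\rangle\bigr],
\]
where $\sigma_k$ is the $k$-th column. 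Testing the Ishii matrix inequality against the vector $(\sigma_k(x_\alpha,u),\sigma_k(y_\alpha,u))\in\R^{2d}$ bounds each summand by $3\alpha|\sigma_k(x_\alpha,u)-\sigma_k(y_\alpha,u)|^2 \leq 3\alpha L^2|x_\alpha-y_\alpha|^2$, which vanishes in the limit.

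Combining these estimates, $F(x_\alpha,p_\alpha,X) - F(y_\alpha,p_\alpha,Y)\to 0$, yielding $\rho\delta \leq 0$, a contradiction. The only technicalities worth double-checking are the uniform-in-$u$ nature of the Lipschitz constants (which follows from the interpretation of \eqref{c.1} as a single $L$ on each compact subset of $\R^d$ valid for all $u\in U$) and the standard passage from boundary inequalities to interior maximum under upper/lower semi-continuous envelopes.
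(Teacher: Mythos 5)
Your proof is correct and follows essentially the same route as the paper. The paper verifies the structural condition (3.14) from Crandall--Ishii--Lions and then invokes \cite[Theorem 3.3]{user}, rather than writing out the doubling-of-variables contradiction argument in full, but the decisive estimates are identical: the second-order term is controlled exactly as you do, by pairing the Ishii matrix inequality with the columns of $\sigma$ (this is \cite[Example 3.6]{user}), and the first-order and zeroth-order terms are handled by the uniform-in-$u$ local Lipschitz bounds. The only cosmetic difference is that the paper passes the pointwise bound on $f_u(x,p,X_\alpha)-f_u(y,p,Y_\alpha)$ through the log-sum-exp representation \eqref{4.0} of $F$, whereas you use $F=\sup_\pi F_\pi$ together with $\sup_\pi A_\pi-\sup_\pi B_\pi\le\sup_\pi(A_\pi-B_\pi)$ and the cancellation of the entropy term; these are equivalent and give the same modulus $\omega$.
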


\begin{proof}
%Let us prove the comparison principle for \eqref{4.0}. The result for the linear equation \eqref{eq:Fpi} follows in the same manner.
In view of \cite[Theorem 3.3]{user}, it suffices to check that the operator $\rho \mu-F(x,p,X)$ is proper and satisfies conditions (3.13) and (3.14) in \cite{user}. By the definition of the operator, we only  need to verify (3.14):
there is a function $\omega:[0,\infty]\to[0,\infty]$ such that $\omega(0+)=0$ and
\beq\lb{314}
F(x,\alpha(x-y),X_\alpha)-F(y,\alpha(x-y),Y_\alpha)\leq \omega(\alpha|x-y|^2+|x-y|)
\eeq
whenever $x,y\in\Omega$, $r\in\bbR$, and $X_\alpha,Y_\alpha\in\Sc^d$ are such that
\beq\lb{4.6}
-3\alpha\begin{pmatrix}
\I_d & 0\\
0 & \I_d
\end{pmatrix}\leq
\begin{pmatrix}
X_\alpha & 0\\
0 & -Y_\alpha
\end{pmatrix}
\leq 3\alpha\begin{pmatrix}
\I_d & -\I_d\\
-\I_d & \I_d
\end{pmatrix}.
\eeq

Indeed from \cite[Example 3.6]{user}, \eqref{4.6} and the uniform Lipschitz continuity of $\sigma(\cdot,u)$ yield for any $u\in{U}$ and $x,y\in\Omega$,
\[
\tr(\sigma(x,u) \sigma^T(x,u)X_\alpha)-\tr(\sigma(y,u) \sigma^T(y,u)Y_\alpha)\leq C\alpha|x-y|^2
\]
where $C>0$ only depends on \eqref{c.1}, $A_1$, and $R$, where $R$ is such that $\Omega\subseteq B_R$. Moreover, using \eqref{c.1} yields, for
\beq\lb{4.7}
f_u(x,p,X):=r(x,u) + b(x,u) \cdot p
+ \frac12\tr(\Sigma(x,u)X),
\eeq
that
\[
f_u(x,p,X_\alpha)-f_u(y,p,Y_\alpha)\leq C\left(|x-y|+|x-y||p|+\alpha{|x-y|^2}\right)
\]
for some $C>0$, and so
\begin{align*}
&\lambda\ln\int_{U} \exp\left[\frac{1}{\lambda}f_u(x,\alpha(x-y),X_\alpha)\right]d u - \lambda\ln\int_{U} \exp\left[\frac{1}{\lambda}f_u(y,\alpha(x-y),Y_\alpha)\right]d u \\
&\qquad\qquad \leq \lambda\ln\int_{U} \exp\left(\frac{1}{\lambda}C(|x-y|+\alpha|x-y|^2)\right)d u =C(|x-y|+\alpha|x-y|^2).
\end{align*}
This and the equation \eqref{4.0} show \eqref{314} with $\omega(z):=Cz$.
\end{proof}

\begin{Lemma}[Comparison principle in $\bbR^d$] \lb{L.cp}
Assume \eqref{c.1}, \eqref{c.3}, and
\beq\lb{c.4}
\rho\geq 4(N+1)(A_2+NA_3).
\eeq
Let  $\mu$ and $v$ be, respectively, a  subsolution and a  supersolution to  \eqref{4.0} in
$\bbR^d$ such that
\beq\lb{4.9}
\limsup_{|x|\to\infty}\frac{\mu(x)-v(x)}{|x|^{N+1}}\leq 0.
\eeq
Then $\mu\leq v$ in $\bbR^d$. %The same result holds if we replace $F$ by $F_\pi$, with the extra assumption that $|\lambda \Hc(x)|\leq A_0(1+|x|^N)$.
\end{Lemma}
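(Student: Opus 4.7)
The argument extends the proof of Lemma \ref{T.1.2} to $\R^d$ by combining doubling of variables with a polynomial weight calibrated to the growth allowed by \eqref{4.9}, and then exploiting the log–exp structure of $F$ together with \eqref{c.4} to absorb the weight-induced terms. Suppose, toward contradiction, that $M := \sup_{\R^d}(\mu-v) > 0$, and introduce the smooth weight $\phi(x) := (1+|x|^2)^{(N+1)/2}$, which is chosen so that $\mu - v - 2\eps\phi \to -\infty$ as $|x|\to\infty$ for every $\eps > 0$, by \eqref{4.9}. For parameters $\alpha, \eps>0$ set
\[
\Phi_{\alpha,\eps}(x,y) := \mu(x) - v(y) - \tfrac{\alpha}{2}|x-y|^2 - \eps(\phi(x)+\phi(y)).
\]
By semicontinuity together with the weight, $\Phi_{\alpha,\eps}$ attains its supremum at some $(x_\alpha, y_\alpha) \in B_R\times B_R$ with $R = R(\eps)$ independent of $\alpha$, and standard arguments yield $\alpha|x_\alpha-y_\alpha|^2 \to 0$ as $\alpha\to\infty$; for $\eps$ small, this supremum is at least $M/2$.

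Invoke Ishii's lemma (Theorem 3.2 of \cite{user}) to produce matrices $X_\alpha, Y_\alpha \in \Sc^d$ such that, with $p_\alpha := \alpha(x_\alpha-y_\alpha)$,
$(p_\alpha + \eps D\phi(x_\alpha), X_\alpha) \in \overline{J}^{2,+}\mu(x_\alpha)$ and
$(p_\alpha - \eps D\phi(y_\alpha), Y_\alpha) \in \overline{J}^{2,-}v(y_\alpha)$, together with the usual block matrix inequality coming from the test function. Subtracting the sub/supersolution inequalities and using that, since $F(x,p,X) = \lambda\log\int_U e^{f_u(x,p,X)/\lambda}\,du$, the elementary bound $\log\int e^{a_u}du - \log\int e^{b_u}du \leq \sup_u (a_u-b_u)$ gives $F(x,p,X) - F(y,q,Y) \leq \sup_u [f_u(x,p,X) - f_u(y,q,Y)]$, we obtain
\[
\rho(\mu(x_\alpha) - v(y_\alpha)) \leq \sup_{u\in U}\bigl[f_u(x_\alpha, p_\alpha + \eps D\phi(x_\alpha), X_\alpha) - f_u(y_\alpha, p_\alpha - \eps D\phi(y_\alpha), Y_\alpha)\bigr].
\]

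For each fixed $u$, the Lipschitz contributions in $r,b,\sigma$ restricted to $B_R$, combined with Ishii's matrix inequality (handled exactly as in \eqref{314}), produce an $o_\alpha(1)$ error; the remaining $\eps$-weighted terms
\[
\eps\bigl[b(x_\alpha,u)\cdot D\phi(x_\alpha) + b(y_\alpha,u)\cdot D\phi(y_\alpha)\bigr] + \tfrac{\eps}{2}\bigl[\tr(\Sigma(x_\alpha,u)D^2\phi(x_\alpha)) + \tr(\Sigma(y_\alpha,u)D^2\phi(y_\alpha))\bigr]
\]
are, by a direct computation using \eqref{c.3} and the explicit form of $D\phi, D^2\phi$, bounded by $C(N,A_2,A_3)\eps(\phi(x_\alpha)+\phi(y_\alpha))$, with $C(N,A_2,A_3)$ matching the threshold in \eqref{c.4}. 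Combining with $\mu(x_\alpha) - v(y_\alpha) \geq M_\eps + \eps(\phi(x_\alpha)+\phi(y_\alpha))$, where $M_\eps := \sup(\mu-v-2\eps\phi) \to M$ as $\eps\to 0$, we get
\[
\rho M_\eps + \bigl(\rho - C(N,A_2,A_3)\bigr)\eps(\phi(x_\alpha)+\phi(y_\alpha)) \leq o_\alpha(1).
\]
Condition \eqref{c.4} makes the coefficient of $\eps\phi$ nonnegative; letting $\alpha\to\infty$ and then $\eps\to 0$ yields $\rho M \leq 0$, contradicting $M>0$.

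The main obstacle is the quantitative bookkeeping: one must match the constant produced by $b\cdot D\phi$ and $\tr(\Sigma D^2\phi)$ against the explicit threshold $4(N+1)(A_2+NA_3)$ of \eqref{c.4}. This requires a careful choice of the weight $\phi$, together with the cancellation across $x_\alpha,y_\alpha$ that appears in the $\alpha$-piece of the matrix term, namely $\alpha\tr[(\sigma(x_\alpha,u)-\sigma(y_\alpha,u))(\sigma(x_\alpha,u)-\sigma(y_\alpha,u))^T]$, which vanishes as $\alpha\to\infty$ thanks to the local Lipschitz continuity of $\sigma$ on $B_R$ guaranteed by \eqref{c.1}.
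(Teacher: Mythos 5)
Your proof follows the same high-level blueprint as the paper — use the log--exp structure of $F$ to reduce to a supremum over $u$ of differences of $f_u$, introduce a polynomial weight of order $N+1$, and absorb the weight-induced terms via \eqref{c.4} — but the technical route is genuinely different, and the route you chose has a gap.

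The paper does not double variables on $\R^d$ at all. It subtracts $\eps(1+|x|^{N+1})$ from $\mu$, verifies (via the same $f_u$-difference computation and \eqref{c.4}) that the resulting $\mu_\eps$ is still a viscosity subsolution, observes that \eqref{4.9} forces $\mu_\eps\le v$ outside a ball $B_{R_\eps}$, and then invokes the already-proved bounded-domain comparison Lemma \ref{T.1.2} on $B_{R_\eps}$. Finally $\eps\to 0$. This is a clean reduction: the only doubling happens inside Lemma \ref{T.1.2} on a compact set, where USC/LSC automatically gives existence of the doubled maximum.

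Your approach re-does the doubling directly on $\R^d\times\R^d$ with the two-sided weight $\eps(\phi(x)+\phi(y))$. The step ``$\Phi_{\alpha,\eps}$ attains its supremum at some $(x_\alpha,y_\alpha)\in B_R\times B_R$'' does not follow from the hypotheses. Condition \eqref{4.9} controls only the \emph{difference} $\mu-v$ at infinity, not $\mu$ and $v$ individually. On the diagonal $y=x$ you get $\Phi_{\alpha,\eps}(x,x)=\mu(x)-v(x)-2\eps\phi(x)\to-\infty$, but for $x\neq y$ the expression $\mu(x)-v(y)-\frac{\alpha}{2}|x-y|^2-\eps(\phi(x)+\phi(y))$ need not be bounded above: if $\mu$ and $v$ both grow, say, faster than $|x|^{N+1}$ (with the same leading part, so that \eqref{4.9} still holds), then sending $x\to\infty$ with $y$ fixed sends $\Phi_{\alpha,\eps}\to+\infty$. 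To make your argument rigorous one would need an additional a priori growth bound on $\mu$ and on $v$ separately (which is provable by barrier arguments, but is an extra step not contained in the hypotheses), or one would need to localize first — at which point you have essentially re-derived the paper's proof structure. Also, the claim that the resulting weight constant ``matches the threshold in \eqref{c.4}'' is asserted rather than verified; the exact bookkeeping depends on the choice of $\phi$ and some dimensional constants and should be carried out if you pursue this route.
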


\begin{proof}
%Let us only prove the first statement, and the second one follows similarly.
%Again we only prove the first statement.
For any $\eps>0$, define
\[
\mu_\eps(x):=\mu(x)-\eps (1+|x|^{N+1}).
\]
We claim that $\mu_\eps$ is a (viscosity) subsolution to \eqref{4.0} in $\bbR^d$.
Indeed, if $\varphi\in \mathcal{C}^\infty(\bbR^d)$ is such that $\mu_\eps-\varphi$ has a local maximum at $x_0\in\bbR^d$. Then $\mu-\varphi_\eps$ with $\varphi_\eps:=\varphi+\eps (1+|x|^{N+1})$ has a local maximum at $x_0$.
Using the notation from \eqref{4.7} and the assumptions \eqref{c.1}--\eqref{c.3} implies that 
\begin{align*}
f_u(x_0,D\varphi_\eps(x_0),&\, D^2\varphi_\eps(x_0))-f_u(x_0,D\varphi(x_0),D^2\varphi(x_0))\\
&\leq \eps(N+1)|b(x_0,u)||x_0|^N+\eps(N+1)N|\Sigma(x_0,u)||x_0|^{N-1}\\
&\leq \eps(N+1)A_2(1+|x_0|)|x_0|^N+\eps(N+1)NA_3(1+|x_0|)^2|x_0|^{N-1}.
\end{align*}
Recall that $F(x,p,X)=\lambda\ln \int_U\exp[\frac1\lambda f_u(x,p,X)]\,du$.
Since $\mu$ is a subsolution to \eqref{4.0},  
\[
\rho \mu(x_0)-F(x_0,D\varphi_\eps(x_0),D^2\varphi_\eps(x_0))\leq 0.
\]
We obtain at $x=x_0$,
\begin{align*}
\rho \mu_\eps-F(x_0,&D\varphi,D^2\varphi)\leq \rho (\mu-\eps(1+|x_0|^{N+1}))-F(x_0,D\varphi_\eps,D^2\varphi_\eps)\\%-\eps\rho(A+|x_0|^{N+1}) \\
&\quad+\eps(N+1)A_2(1+|x_0|)|x_0|^N+\eps(N+1)NA_3(1+|x_0|)^2|x_0|^{N-1}\\
&\leq -\eps\rho\left( 1+|x_0|^{N+1}\right)+\eps(N+1)(A_2+NA_3)(1+|x_0|)^2|x_0|^{N-1}.
\end{align*}
Hence, by \eqref{c.4}, we get from the above that
\[
\rho \mu_\eps-F(x_0,D\varphi,D^2\varphi)\leq 0.
\]
Therefore, for all $\eps>0$, $\mu_\eps$ is a subsolution to \eqref{4.0}.

Now by \eqref{4.9}, there exists $R_\eps>0$ such that $\lim_{\eps\to0}R_\eps=\infty$ and $\mu_\eps(x)\leq v(x)$ for all $|x|\geq R_\eps$. Therefore, applying Lemma \ref{T.1.2} to $v,\mu_\eps$ with $\Omega=B_{R_\eps}$ yields
\[
\mu_\eps(x)\leq v(x)\quad\text{ for all }x\in B_{R_\eps}.
\]
Taking $\eps\to 0$ leads to $\mu\leq v$ in $\bbR^d$.
\end{proof}

\begin{Proposition}\lb{P.4.1}
Under the assumption of Lemma \ref{L.cp},
%Assume \eqref{c.1}--\eqref{c.3}. There exists $\rho_0\geq 1$ such that if $\rho\geq \rho_0$, then 
there exists a unique (viscosity) solution $v$ to \eqref{4.0} such that for all $x\in\R^d$,
\beq\lb{4.8}
|v(x)|\leq 2A_1\rho^{-1} (1+|x|^2)^{N/2}.
\eeq
%The same result holds if we replace $F$ by $F_\pi$.
\end{Proposition}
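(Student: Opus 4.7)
The plan is to combine uniqueness, which will follow immediately from the comparison principle (Lemma \ref{L.cp}), with existence via Perron's method applied to explicit polynomial barriers $w^\pm(x) := \pm C_+\rho^{-1}(1+|x|^2)^{N/2}$ where $C_+$ is taken slightly above $2A_1$ and later optimized.

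For uniqueness, any two solutions $v_1, v_2$ satisfying \eqref{4.8} differ by at most $4A_1\rho^{-1}(1+|x|^2)^{N/2}$, which is $O(|x|^N) = o(|x|^{N+1})$ as $|x|\to\infty$. Thus \eqref{4.9} is satisfied for $v_1-v_2$ and for $v_2-v_1$, and Lemma \ref{L.cp} applied in both directions gives $v_1 \equiv v_2$. For existence, I would first verify that $w^+$ is a classical supersolution and $w^-$ a classical subsolution. The key facts are: (i) since $|U|=1$, Jensen's inequality (applied to $\exp$) gives the pointwise bound
\[
F(x,p,X) = \lambda \ln \int_{U}\exp\!\big[\lambda^{-1}f_u(x,p,X)\big]\,du \leq \sup_{u\in U} f_u(x,p,X),
\]
with the reverse inequality replacing $\sup$ by $\inf$; (ii) direct differentiation yields $|Dw^+(x)| \leq C_+ N\rho^{-1}(1+|x|^2)^{(N-1)/2}$ and $|D^2w^+(x)|\leq C_+ N^2\rho^{-1}(1+|x|^2)^{N/2-1}$; (iii) the growth bounds \eqref{c.3} together with $1+|x|^N \leq 2(1+|x|^2)^{N/2}$ and $1+|x| \leq \sqrt{2}(1+|x|^2)^{1/2}$ let us majorize each term in $f_u(x,Dw^+,D^2w^+)$ by a multiple of $(1+|x|^2)^{N/2}$. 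The supersolution inequality $\rho w^+ \geq F(x,Dw^+,D^2w^+)$ then reduces to an algebraic condition of the form $C_+ \geq 2A_1 + C_+ K/\rho$ for some $K = K(N,d,A_2,A_3)$, which under \eqref{c.4} can be satisfied with $C_+$ arbitrarily close to $2A_1$. Once $w^\pm$ are in hand, Perron's method (in the standard form from Crandall--Ishii--Lions, using Lemma \ref{T.1.2} as the local comparison principle) produces a viscosity solution $v$ on $\mathbb{R}^d$ with $w^- \leq v \leq w^+$. Sending $C_+\downarrow 2A_1$ and invoking uniqueness at each inflated level, or equivalently applying Lemma \ref{L.cp} to $v$ and $w^+_\eps := (2A_1+\eps)\rho^{-1}(1+|x|^2)^{N/2}$ for each $\eps>0$ and then letting $\eps\to 0$, yields the sharp bound \eqref{4.8}.

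The \textbf{main obstacle} is the quantitative barrier verification with the tight constant $2A_1$. The leading $|x|^{N}$ growth of $r$ forces $\rho w^+$ to match $r$ asymptotically to the constant $2A_1$, so the $b\cdot Dw^+$ and $\tr(\Sigma D^2w^+)$ corrections must be absorbed exactly by the $O(\rho^{-1})$ slack. This is precisely the content of the condition \eqref{c.4}: the factor $4(N+1)(A_2+NA_3)$ is calibrated so that, after dimensional constants from differentiating $(1+|x|^2)^{N/2}$ twice, the algebraic inequality for the supersolution property holds. A secondary technical point is that Perron's method must be coupled with a limiting argument on expanding balls, but since $w^\pm$ are \emph{global} classical barriers, Lemma \ref{T.1.2} together with a diagonal extraction makes this passage routine.
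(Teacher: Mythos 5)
Your overall strategy — polynomial barriers $w^\pm = \pm C_+\rho^{-1}(1+|x|^2)^{N/2}$, Perron's method using Lemma \ref{T.1.2} as the local comparison tool, and then global uniqueness from Lemma \ref{L.cp} — is exactly the paper's proof. However, the quantitative bookkeeping in the barrier verification is off, and as written it does not deliver the constant $2A_1$ in \eqref{4.8}. You reduce the supersolution property to the algebraic condition $C_+ \geq 2A_1 + C_+K/\rho$, i.e.\ $C_+(1-K/\rho)\geq 2A_1$. Since \eqref{c.4} only guarantees $K/\rho$ is bounded away from $1$ (not that it is small), this forces $C_+$ strictly above $2A_1$ and there is no room to send $C_+\downarrow 2A_1$; the auxiliary functions $w^+_\eps=(2A_1+\eps)\rho^{-1}(1+|x|^2)^{N/2}$ are not supersolutions for small $\eps>0$, so the subsequent ``$\eps\to 0$'' comparison step does not actually improve the bound. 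The paper avoids this by splitting the $\rho$-weight: it arranges the estimate so that one half of $\rho A_0 = 2A_1$ dominates the reward term $A_1(1+|x|^2)^{N/2}$ while the other half $\rho A_0/2$ absorbs the drift and diffusion corrections $A_0 N(A_2+A_3(N-1))(1+|x|^2)^{N/2}$ (which is exactly what \eqref{c.4} guarantees), so that $A_0 = 2A_1/\rho$ works directly with no limiting argument. Concretely, your algebraic target should be $C_+\big(1 - K/\rho\big)\geq A_1$ rather than $\geq 2A_1$; then $K/\rho\leq 1/2$ from \eqref{c.4} yields $C_+ = 2A_1$ immediately. With that correction your proof coincides with the paper's.
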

\begin{proof}
%We only consider \eqref{4.0}. 
To prove the existence and uniqueness of solutions, in view of the comparison principle, it suffices to produce a supersolution and a subsolution with a polynomial growth at infinity and invoke Perron's method.

By \eqref{4.0} and \eqref{c.3}, for any $(x,p,X)\in\bbR^d\times\bbR^{d}\times \Sc^d$,  we have 
\beq\lb{2.6}
 F(x,p,X)\leq A_1(1+|x|^2)^{N/2}+A_2(1+|x|^2)^{1/2}|p|+A_3(1+|x|^2)|X|.
\eeq
Set $\phi(x):=(1+|x|^2)^{N/2}$, and then define $\bar{\mu}(x):=A_0\phi(x)$ with $A_0:=2A_1/\rho$.
For simplicity, below we drop $(x)$ from the notations of $\bar{\mu}(x)$, $\phi(x)$. 
We have from \eqref{2.6} and direct computations that
\begin{align*}
\rho \bar{\mu}- F(x,D \bar{\mu},D^2\bar{\mu})&\geq \rho A_0(1+|x|^2)^{N/2}-A_1(1+|x|^2)^{N/2}-A_0N(A_2+A_3(N-1))(1+|x|^2)^{N/2}\\
&\geq (\rho A_0/2-A_1)(1+|x|^2)^{N/2}\geq 0,    
\end{align*}
thanks to \eqref{c.4}. 
Thus, $\bar{\mu}$ is a supersolution. 

Similarly, it can be shown that $\underline{\mu}:=-\bar{\mu}$ is a subsolution. It is clear that 
\[
\lim_{|x|\to\infty}\frac{|\bar{\mu}(x)|+|\underline{\mu}(x)|}{|x|^{N+1}}=0.
\]
Thus by Perron's method and Lemma \ref{L.cp}, we obtain the unique solution $v$ to \eqref{4.0} such that $\underline{\mu}\leq v\leq \bar{\mu}$, which yields \eqref{4.8}. 
\end{proof}

\smallskip

\begin{Remark}\label{rm1}
The following comments are in order.
\begin{itemize}
\item[1.] If we do not assume $\rho$ to be sufficiently large, then the uniqueness of solutions might fail. For example (when $d=1$) both $v\equiv 0$ and $v=x$ are solutions to
$
v-xv_x=0
$,
and both $v\equiv 0$ and $v=x^2+1$ are solutions to
$
v-\frac{1}{2}(1+x^2)v_{xx}=0
$. %, and for any fixed $N\geq 2$, $$

\item[2.] For any fixed large $\rho=N(N-1)$ with $N\geq 2$ an integer, the uniqueness of solutions still fails in general. To see this, we first construct the following $N+1$ numbers: $a_N=1$, $a_{N-1}=\frac{N}{2(N-1)}$, and define iteratively for $k=N-2,\ldots, 0$,
\[
a_k=\frac{(k+1)a_{k+1}+(k+2)(k+1)a_{k+2}}{N(N-1)-k(k-1)}.
\]
Then one can check directly that 
$
v=\sum_{k=0}^N a_kx^k$ and $v\equiv 0$
are both solutions to $\rho v-v_x-\frac12(1+x^2)v_{xx}=0$. This does not contradict Proposition \ref{P.4.1} because our result claims the unique solution among functions that grow as fast as a polynomial of power $c\sqrt{\rho}$ for some possibly small $c>0$ (depending on the assumption \eqref{c.3}). However, in the example, $\sqrt{\rho}= \sqrt{N(N-1)}$. 

Also, we cannot allow exponential growth of solutions (otherwise uniqueness fails). For instance, both $0$ and $e^x$ are solutions to $v-v_{xx}=0$.
These examples show the optimality of our assumptions on $\rho$ in terms of $N$ in Lemma \ref{L.cp} and Proposition \ref{P.4.1}.

\item[3.] If $b(\cdot,u)$ and $\sigma(\cdot, u)$ are only allowed to have a sublinear growth in $x$, that is,
\[
\lim_{R\to\infty}\sup_{(x,u)\in B_R\times U}\frac{|b(x,u)|+ |\sigma(x,u)|}{R}=0,
\]
then the existence and uniqueness of solutions, and the comparison principle hold the same without having to assume $\rho$ to be sufficiently large. The proof is similar to the one presented in the paper. We also refer the reader to \cite[Theorem 8]{tang2022exploratory}.
\end{itemize}
\end{Remark}

\section{Convergence of PIA with unbounded coefficients}\label{subsec:unbound.convergence}

This section concerns the case when $\sigma$ is independent of the control, and the coefficients can be unbounded.

Let us start with the following interior $W^{2,p}$ estimate. The classical result can be found, for example, in \cite[Chapter 3, Theorem 4.2]{chen1998second}. However, it is not sufficient for us as we need to carefully track the dependence of the constants on the size of the coefficients of the equation.

Let $\tilde v$ be a solution to
\be\label{eq:lm:unboundest} 
\rho \tilde v(x)- \tilde{r}(x) - \tilde{b}(x) \cdot {D}  \tilde v(x)-2^{-1}\tr\left( \tilde\Sigma (x) {D} ^2 \tilde v(x) \right) =0.
\ee 
\begin{Lemma}\lb{L.6.1}
Assume that $\rho\geq 1$, and  for some $ \tilde C_0>0$, $\tilde\Sigma=\tilde\sigma \tilde\sigma^T\geq \I_d/  \tilde  C_0$ and $\tilde\sigma$ is Lipschitz continuous with constant $\tilde  C_0$. Consider the ball $B_2(x_0)$ for an arbitrary $x_0\in \R^d$. Suppose there exist $\tilde A_1,\tilde A_2,\tilde A_3\geq 1$
such that 
\beq\lb{6.1}
\|\tilde r(\cdot)\|_{L^\infty(B_2(x_0))}\leq \tilde A_1,\,\,\|\tilde b(\cdot)\|_{L^\infty(B_2(x_0))}\leq \tilde A_2 ,\,\,\|\tilde\sigma(\cdot)\|_{L^\infty(B_2(x_0))}\leq \tilde A_3.
\eeq
%and $\tilde\sigma$ is Lipschitz continuous with constant $C_0$.
Then for any $p>d$, there exists $C=C(\tilde C_0,p, d)>0$, which is independent of $\tilde A_1,\tilde A_2,\tilde A_3$, $\rho$, and $x_0$, such that the solution $\tilde v$ to \eqref{eq:lm:unboundest} satisfies
\beq\lb{6.2}
\|\tilde v\|_{W^{2,p}(B_1(x_0))}\leq C\left[
\tilde{A}_1 +(\tilde{A}_2^{2}+\tilde{A}_3^{{4+\frac{2d}{p}}}+\rho)\|\tilde  v\|_{L^\infty(B_{2}(x_0))}    \right].
\eeq
\end{Lemma}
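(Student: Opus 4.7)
The strategy is to derive the bound via a two-step rescaling plus the standard interior $W^{2,p}$ estimate for uniformly elliptic linear equations (e.g.\ Chen–Wu Chapter 3, Theorem 4.2), with the constant tracked explicitly in $\tilde A_2, \tilde A_3, \rho$. Assume without loss of generality $x_0=0$ and rewrite \eqref{eq:lm:unboundest} as
\[
-\tfrac12\tilde\Sigma^{ij}(x)D_{ij}\tilde v-\tilde b(x)\cdot D\tilde v+\rho\tilde v=\tilde r\quad\text{on }B_2.
\]
The ellipticity $\tilde\Sigma\geq \I_d/\tilde C_0$ and the Lipschitz bound on $\tilde\sigma$ are universal, while the lower-order coefficients are controlled by $\tilde A_2$ and $\rho$, and the diffusion satisfies $\|\tilde\Sigma\|_\infty\leq \tilde A_3^2$.

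The first move is a spatial rescaling $y=Rx$ with $R:=\max\{1,\tilde A_2,\sqrt{\rho}\}$. Setting $w(y):=\tilde v(y/R)$ on $B_{2R}$ and multiplying the equation by $R^{-2}$ produces
\[
-\tfrac12\tilde\Sigma^{ij}(y/R)D_{ij}w-(\tilde b(y/R)/R)\cdot D_y w+(\rho/R^2)w=\tilde r(y/R)/R^2,
\]
whose drift and zeroth-order coefficients are now bounded by $1$, while the diffusion retains $\I_d/\tilde C_0\leq \tilde\Sigma\leq \tilde A_3^2 \I_d$ and the Lipschitz constant of $\tilde\sigma$ in $y$ is $\tilde C_0/R\leq \tilde C_0$. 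A cutoff $\eta\in C_c^\infty(B_{2R})$ with $\eta\equiv 1$ on $B_R$ and $|D^k\eta|\leq CR^{-k}$ localizes the problem, so that $\eta w$ solves a Dirichlet problem on $B_{2R}$ with right-hand side involving $\tilde r/R^2$ together with commutator terms in $w$, $D_y w$, and derivatives of $\eta$.

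The main technical step is a variable-coefficient $W^{2,p}$ estimate on $B_{2R}$ with explicit $\tilde A_3$ tracking. One freezes $\tilde\Sigma$ at a point $y_0$ and applies the Calderón–Zygmund estimate for the constant-coefficient operator, $\|D^2 u\|_{L^p}\lesssim \tilde C_0\,\|-\tilde\Sigma^{ij}(y_0)D_{ij}u\|_{L^p}$, obtained via the linear change of variables $y\mapsto \tilde\Sigma(y_0)^{-1/2}y$ (note that $\Lambda$ does not appear explicitly in this constant). The error $(\tilde\Sigma(y)-\tilde\Sigma(y_0))D^2 w$ is absorbed by restricting to balls of radius $r_0\sim 1/\tilde A_3$, where the oscillation of $\tilde\Sigma$ is small compared with the ellipticity; covering $B_R$ by $\sim (\tilde A_3 R)^d$ such balls, one collects (i)~a factor $r_0^{-2}\sim \tilde A_3^2$ from the scale-invariant Sobolev bound, (ii)~a factor $\|\tilde\Sigma\|_\infty\sim \tilde A_3^2$ transferred from the commutator, and (iii)~a factor $r_0^{-d/p}\sim \tilde A_3^{d/p}$ (squared once the $L^p$ norm is eventually converted to $L^\infty$), combining to the declared exponent $4+2d/p$. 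The drift contribution $\tilde b\cdot Dw$ is handled by interpolation $\|Dw\|_{L^p}\leq \varepsilon\|D^2 w\|_{L^p}+C\varepsilon^{-1}\|w\|_{L^p}$ with $\varepsilon\sim 1/\tilde A_2$, yielding the $\tilde A_2^2$ factor after absorbing $\varepsilon\|D^2 w\|_{L^p}$ into the left-hand side.

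Undoing the rescaling through $\|D^2 w\|_{L^p(B_R)}=R^{-2+d/p}\|D^2\tilde v\|_{L^p(B_1)}$ (and analogues for the lower derivatives), together with $\|\tilde v\|_{L^p(B_2)}\lesssim \|\tilde v\|_{L^\infty(B_2)}$ and $\|\tilde r\|_{L^p(B_2)}\lesssim \tilde A_1$, delivers \eqref{6.2}. The main obstacle I expect is the careful bookkeeping through the freezing–and–covering step: the power $\tilde A_3^{4+2d/p}$ arises only after balancing the scale $r_0\sim 1/\tilde A_3$ against the $L^p\to L^\infty$ conversion and the rescaling factors, and one needs to verify that the initial rescaling by $R$ (which generates the $\tilde A_2^2$ and $\rho$ contributions) does not corrupt the $\tilde A_3$ tracking. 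This last point is not serious, since $\tilde A_3$ enters the estimate only through the frozen-coefficient argument on the already-rescaled equation, where $R$ has been fixed independently of $\tilde A_3$.
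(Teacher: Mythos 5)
Your high-level plan — freeze the diffusion coefficient, apply the constant-coefficient Calder\'on--Zygmund estimate, interpolate to handle first derivatives, and cover the unit ball by small balls — is the same as the paper's. The notable structural difference is your initial rescaling $y=Rx$ with $R=\max\{1,\tilde A_2,\sqrt\rho\}$. The paper does not rescale; it instead directly chooses the freezing radius $\tilde R\sim(\tilde A_2+\tilde A_3^2)^{-1}$ so that \emph{both} the drift and the diffusion are controlled on that scale, and it absorbs the resulting second-derivative error via the hole-filling iteration lemma (Chen--Wu, Chapter~2, Lemma~4.1), which is reproduced after the proof. In principle the rescaling-first variant could substitute for part of that bookkeeping, but as written your argument has gaps that prevent it from closing.

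Concretely: \textbf{(a)} The handling of $\tilde A_2$ is internally inconsistent. After the rescaling, the drift coefficient is $\tilde b(y/R)/R$, which you correctly say is bounded by $1$; at that point interpolation contributes only an $O(1)$ constant, and the $\tilde A_2^2$ comes solely from the factor $R^2\sim\tilde A_2^2+\rho$ when you undo the rescaling. Invoking, in addition, interpolation with $\varepsilon\sim 1/\tilde A_2$ overcounts the drift by $\tilde A_2^2$. You should use one mechanism, not both. \textbf{(b)} The freezing scale is misstated. In the rescaled variable, $y\mapsto\tilde\Sigma(y/R)$ has Lipschitz constant $\sim\tilde A_3\tilde C_0/R$, so the oscillation over a ball of radius $r_0$ is $\sim\tilde A_3 r_0/R$, and absorbing the frozen-coefficient error against the ellipticity $1/\tilde C_0$ forces $r_0\sim R/\tilde A_3$, not $r_0\sim 1/\tilde A_3$. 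The covering count, the $r_0^{-2}$ from the cutoff, and the $L^p\!\to\!L^\infty$ conversion all inherit this $R$-dependence, so the claimed cancellation of $R$ against the final undoing factor $R^{2-d/p}$ must be checked explicitly; your sketch does not do this, and it is not obvious that the $\tilde A_2,\rho$-dependence stays decoupled from the $\tilde A_3$-power. \textbf{(c)} After freezing and cutting off, the commutator $(\tilde\Sigma(y/R)-\tilde\Sigma(y_0/R))D^2(\xi w)$ produces a $\|D^2 w\|_{L^p}$-term on the \emph{larger} ball supporting $\xi$, while the left-hand side controls $\|D^2 w\|_{L^p}$ only on the smaller ball where $\xi\equiv 1$; this cannot be "absorbed into the left-hand side" directly. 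The paper resolves this with the iteration lemma between nested radii $R_0\leq\eta<R\leq\tilde R$; your outline omits this step entirely, and without it or an equivalent device the argument does not close.
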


\begin{proof}
%	Fix an arbitrary $p>d$. We only prove the result for $x_0=0$ and write $B_R$ short for $B_R(0)$. The proof for any other $x_0$ is a modification of the arguments below with a shifting. We further divide the proof into three steps. 
Without loss of generality, we only prove the result for $x_0=0$. Below, we divide the proof into three steps.
	
{\bf Step 1.}  We first derive a Sobolev inequality over $B_R$ with explicit dependence of constants on $R\in (0,\frac{1}{2})$. Since $p>d$, we can apply Gagliardo-Nirenberg interpolation inequality to $\mu(x):=\tilde v(Rx)$ in $B_1$ to get
\[
\|D\mu\|_{L^p(B_1)}\leq C\|D^2 \mu\|_{L^p(B_1)}^\theta\|\mu\|_{L^\infty(B_1)}^{1-\theta}+C\|\mu\|_{L^\infty(B_1)},
\]
where the parameters satisfy
\[
\frac1p=\frac1d+\theta\left(\frac1p-\frac2d\right),
\]
and $C$ only depends on $p$ and $d$. By Young's inequality, we have for any $\eps\in (0,1]$  that
\[
\|D\mu\|_{L^p(B_1)}\leq \eps \|D^2 \mu\|_{L^p(B_1)}+C \eps^{-\frac{\theta}{1-\theta}}\|\mu\|_{L^\infty(B_1)},%=\eps\|D^2 u\|_{L^p(B_1)}+C(1+\eps^{-1+\frac{d}{p}})\|u\|_{\infty},
\]
%ith $C$ only depending on $p,d$, 
which implies, for $C$ only depending on $p, d$ and independent of $R, \eps$ that
\beq\lb{6.4}
\|D \tilde v\|_{L^p(B_R)}\leq  \eps R\|D^2 \tilde v\|_{L^p(B_R)}+ C \eps^{-1+\frac{d}p} R^{-1+\frac{d}p} \| \tilde v\|_{L^\infty(B_R)}.
\eeq
	
{\bf Step 2.}  In this step, we derive a uniform Sobolev estimate for a transformed function $\tilde w(x)$ defined below on small balls.
%Fix any $x_1\in B_1$. and take $\tilde{R}\in (0,\frac12)$.  

First, since $\tilde \Sigma=\tilde\sigma\tilde\sigma^T$ and \eqref{6.1}, there exists $c\in (0,1)$ depending on $d$ such that 
\be\label{eq:lm6.1.ball} 
	\left\{ \tilde\Sigma(x_1)^{1/2} x+x_1\,|\,x\in B_{c/\tilde A_3} \right\}\subset B_2\quad \text{ for all } x_1\in B_1.
\ee 
Set $\tilde R:= c'(\tilde A_2+\tilde{A}_3^2 )^{-1}$ for some $c'\in (0,c)$ to be determined in the proof.
Then, by the assumption that $\tilde\sigma$ is Lipschitz continuous, there exists $C'$ depending only on ${\tilde C_0}$ such that for any $0<R< \tilde R$,
\be\label{eq:lm6.1.Sigmay}  
\left| \tilde\Sigma\left( \tilde\Sigma(x_1)^{1/2} x+x_1 \right)-\tilde\Sigma(x_1) \right| \leq C'\tilde{A}_3^2 |x|\leq  C'\tilde{A}_3^2 R.
\ee

From now on, fix $x_1\in B_1$ and define $\tilde w(x):=\tilde v\left(\tilde\Sigma(x_1)^{1/2} x+x_1\right)$. \eqref{eq:lm6.1.ball} yields that $\tilde w$ is well-defined in a neighbourhood of $B_{\tilde R}$. A direct calculation shows that $\tilde w$ satisfies
	$
	-\frac{1}{2} \Delta \tilde w(x)= g(x)
	$
	with 
	\be\label{eq:lm6.1.g} 
	\begin{aligned}
		g(x):=&\tilde r\left( \tilde\Sigma(x_1)^{1/2} x+x_1 \right)+\tilde b\left( \tilde\Sigma(x_1)^{1/2} x+x_1 \right)\cdot \tilde\Sigma(x_1)^{-1/2}D\tilde w(x)\\
		& +2^{-1}\tr\left( \left( \tilde\Sigma(\tilde\Sigma(x_1)^{1/2} x+x_1)\tilde\Sigma(x_1)^{-1}-\I_d \right) D^2 \tilde w(x) \right)
		-\rho \tilde w(x).
	\end{aligned}
	\ee 

Next, for any $R\in(0, \tilde R]$ and $\eta \in (0,R)$, take $\zeta\in \Cc_0^\infty(B_{R})$ a cutoff function such that
\[
	\zeta\leq 1\,\text{ in }B_{R},\quad \zeta=1\,\text{ in }B_\eta, \quad\text{ and }\quad |D^k\zeta|\leq C (R-\eta )^{-k}\text{ with }k=1,2.
\]
Here the constant $C$ can be taken independent of $R$ and $\eta $.
Then $\hat{w}(x):=\tilde w ( x)\zeta(x)$ satisfies 
\[
2^{-1}\Delta \hat w(x)=g(x)\zeta(x)-D\tilde w(x)\cdot D\zeta(x) -2^{-1}\tilde w(x)\Delta \zeta (x)=:\hat g (x).
\]
By classical results for elliptic equations (e.g., \cite[Chapter 3, Theorem 3.6]{chen1998second}), we have that
\bee
\|D^2\hat w\|_{L^p(B_{ R})}\leq C\| \hat g \|_{L^p(B_{ R})},
\eee
where $C$ depends only on $p,d$. %\footnote{\cite[Theorem 3.6]{chen1998second} provides $\|D^2\tilde w\|_{L^p(B_R)}\leq C\| g \|_{L^p(B_{R})}$ by requiring $\tilde w$ vanishing at the boundary of $B_R$, here we simply take a larger radius $2 R$ for not assuming such condition.} 
Then by the definition of $\zeta$, the above estimate implies that
\beq\lb{6.7}
\|D^2 \tilde w\|_{L^p(B_\eta)}\leq C\|g\|_{L^p(B_{R})}+C(R-\eta )^{-1}\|D\tilde w\|_{L^p(B_{R})}+C(R-\eta )^{-2}\|\tilde w\|_{L^\infty (B_{R})},
\eeq
with $C$ independent of $0<\eta<R\leq \tilde R$.
%for all $R>0$ such that the elliptic set $\left\{ \tilde\Sigma(0)^{1/2} x\,|\,x\in B_\frac{3}{2}R \right\}\subset B_2$. It is suffices to assume $\tilde{A}_3R\ll1$.
Then \eqref{eq:lm6.1.Sigmay}--\eqref{6.7} together yield that
\be\label{eq:lm6.1.cut}
\begin{aligned}
\|D^2\tilde w\|_{L^p(B_\eta)}
&\leq C \left(
		\|\tilde r\|_{L^\infty(B_1)}|B_{R}|^{1/p}+\|\tilde b\|_{L^\infty(B_1)} \|D\tilde w\|_{L^p(B_{R})}+\tilde{A}_3^2 R\|D^2\tilde w\|_{L^p(B_{R})}+\rho\|\tilde w\|_{L^p(B_{R})}
		\right)\\
		&\quad +C(R-\eta )^{-1}\|D \tilde w\|_{L^p(B_{R})}+C(R-\eta )^{-2}\|\tilde w\|_{L^\infty(B_{R})}\\
		&\leq C \left(
		\tilde A_1 R^{d/p}+\tilde  A_2 \|D \tilde w\|_{L^p(B_{R })}+\tilde{A}_3^2 R\|D^2 \tilde w\|_{L^p(B_{R})}+\rho R^{d/p}\| \tilde w\|_{L^\infty(B_{R })}
		\right)\\
		&\quad +C(R-\eta )^{-1}\|D\tilde w\|_{L^p(B_{R})}+C(R-\eta )^{-2}\|\tilde w\|_{L^\infty(B_{R})}.
\end{aligned}
\ee
where $C$ only depends on ${\tilde C_0}, p$, and $d$. In particular, it is independent of $0<\eta<R\leq \tilde R$ and $x_1\in B_1$. 

%where $C$ is independent of $0<\eta<2R\leq \tilde 2R$ and $y\in B_1$, and depends only on $C', \tilde C_0, p, d$, thus depends only on $\tilde C_0, p$. 
	
Now, it follows from \eqref{6.4} that there exists $C$ independent of $R\in (0,\tilde R)$ and $\eps\in (0,1]$ such that
\[
\|D\tilde w\|_{L^p(B_{R})}\leq  \eps R\|D^2 \tilde w\|_{L^p(B_{R})}+C \eps^{-1+\frac{d}{p}} R^{-1+\frac{d}{p}}\|\tilde w\|_{L^\infty(B_{R})}.
\]
%where $\frac{\theta}{1-\theta}\in (0,1)$ due to $\theta\in(0,\frac12)$.
Since $R\leq \tilde R\leq (\tilde A_2+\tilde{A}_3^2 )^{-1}$,
applying the above estimate with $\eps=1$ in the last but one line of \eqref{eq:lm6.1.cut} and with $\eps=(\tilde A_2+\tilde{A}_3^2 )(R-\eta)\leq 1$ in the last line of \eqref{eq:lm6.1.cut} yields
\begin{align*}
		\|D^2 \tilde w\|_{L^p(B_{\eta })}
		&\leq C_1(\tilde A_2+\tilde{A}_3^2 )R\|D^2 \tilde w\|_{L^p(B_{R })}+ C_1\left(
		\tilde A_1 R^{\frac{d}{p}}+(\tilde A_2 R^{-1+\frac{d}{p}}+\rho R^{\frac{d}{p}} )  \| \tilde w\|_{L^\infty(B_{R })}
		\right)\\
		&\quad +C_1(1+((\tilde A_2+\tilde{A}_3^2 )R)^{-1+\frac{d}{p}})(R-\eta )^{-2} \|\tilde w\|_{L^\infty(B_{R})}, 
\end{align*}
where $C_1\geq 1$ depends only on ${\tilde C_0}, p$, and $d$. 

Let us now take $c'=\min\{c,1/(2C_1)\}$, $\tilde R=c'(\tilde A_2+\tilde{A}_3^2 )^{-1}$, and $R_0:=\tilde R/2$.
Thus, for all  $R_0\leq \eta<R\leq \tilde R$, the above inequality yields for some $C_1'>0$ depending only on $C_1,c',d,p$,
\begin{align*}
		\|D^2 \tilde w\|_{L^p(B_{\eta })}
		&\leq \frac{1}{2}\|D^2 \tilde w\|_{L^p(B_{R })}+ C_1\left(
		\tilde A_1\tilde  R^{\frac{d}{p}}+(\tilde A_2 \tilde  R^{-1+\frac{d}{p}}+\rho \tilde  R^{\frac{d}{p}})  \| \tilde w\|_{L^\infty(B_{\tilde R})}
		\right)\\
		&\quad +C_1'(R-\eta )^{-2} \|\tilde w\|_{L^\infty(B_{\tilde R})}.
\end{align*}
We use \cite[Chapter 2, Lemma 4.1]{chen1998second} to get for all $R_0\leq \eta<R\leq \tilde R$ that
\[
\|D^2\tilde w\|_{L^p(B_{\eta})}\leq   C\left(
		\tilde A_1 R_0^{\frac{d}{p}}+(\tilde A_2 R_0^{-1+\frac{d}{p}}+\rho R_0^{\frac{d}{p}})  \| \tilde w\|_{L^\infty(B_{2R_0 })}
		\right) +C(R-\eta )^{-2} \|\tilde w\|_{L^\infty(B_{2R_0})},
\]
which yields
\be\label{eq:lm6.1.estw} 
\begin{aligned}
\|D^2\tilde w\|_{L^p(B_{R_0})}\leq   C\left(
		\tilde A_1 R_0^{\frac{d}{p}}+(\tilde A_2 R_0^{-1+\frac{d}{p}}+\rho R_0^{\frac{d}{p}}+R_0^{-2})  \| \tilde w\|_{L^\infty(B_{2R_0 })}
		\right) 
\end{aligned}
\ee
where $C$ only depends on ${\tilde C_0}, p, d$. For the reader's convenience, we copy \cite[Chapter 2, Lemma 4.1]{chen1998second} after the proof.

{\bf Step 3.} Let us turn back to $\tilde v$. 
Notice that there exists $\tilde C\geq 1$ only depending on ${C_0}$ such that $\tilde\Sigma(x_1)^{-1/2}B_R:=\{\tilde\Sigma(x_1)^{-1/2} x\,|\,x\in B_R\}\subset B_{\tilde CR}$ for all $x_1\in B_1$. Set $r_0=R_0/\tilde C$, and then
\eqref{eq:lm6.1.estw} and $\tilde w(x)=\tilde v(\tilde\Sigma(x_1)^{1/2} x+x_1)$ together give that
\beq\lb{6.8}
\begin{aligned}
		\|D^2\tilde  v\|_{L^p(B_{r_0}(x_1))}&=\left(\int_{\tilde\Sigma(x_1)^{-1/2} B_{r_0}}  |D^2\tilde w(z)|^p \tilde\Sigma(x_1)^{-1/2} dz\right)^{1/p}\leq C\|D^2\tilde w\|_{L^p\left( B_{R_0}\right)} \\
		&\leq C\left(
		\tilde{A}_1 r_0^{\frac{d}{p}}+(\tilde{A}_2r_0^{-1+\frac{d}{p}}+\rho r_0^{\frac{d}{p}} + r_0^{-2}  )\|\tilde  v \|_{L^\infty\left( B_2 \right)}\right),
\end{aligned}
\eeq
where we also used \eqref{eq:lm6.1.ball} in the last inequality, and the constant $C$ only depends on ${\tilde C_0}, p, d$.

Next, we can take $N:=(\lfloor\sqrt{d}/ r_0 \rfloor+1)^d$ balls centered at $\{y_1,y_2,\ldots,y_N\}\subseteq B_1$ such that $B_1\subseteq \bigcup_{i=1}^N B_{r_0}(y_i)$. By applying \eqref{6.8} to $x_1=y_1,\cdots, y_N$, we have that
	\begin{align*}
		\|D^2\tilde v\|_{L^p(B_1)}&\leq \left(\sum_{i=1}^N \|D^2\tilde v\|_{L^p(B_{r_0}(y_i))}^p\right)^{1/p}
		\leq  CN^{1/p}\left[
		\tilde{A}_1 r_0^{\frac{d}{p}}+(\tilde{A}_2r_0^{-1+\frac{d}{p}}+\rho r_0^{\frac{d}{p}} + r_0^{-2}  )\|\tilde  v \|_{L^\infty\left( B_2 \right)}\right]\\
		& \leq C\left[
		\tilde{A}_1+( \tilde{A}_2r_0^{-1}+\rho + r_0^{-2-\frac{d}{p}})\| \tilde v\|_{L^\infty(B_{2})}\right].
	\end{align*}
Recall that $r_0=c'/\tilde C(\tilde A_2+\tilde{A}_3^2)^{-1}$.
By \eqref{6.4} again, we obtain
\[
\|\tilde v\|_{W^{2,p}(B_1)}\leq C\left[
\tilde{A}_1 +(\tilde{A}_2^{2}+\tilde{A}_2\tilde{A}_3^{2}+\tilde{A}_3^{{4+\frac{2d}{p}}}+\rho)\| \tilde v\|_{L^\infty(B_{2})}    \right]
\]
where $C$ is independent of $\tilde{A}_1,\tilde{A}_2,\tilde{A}_3$, and $\rho$. This finishes the proof.% by Young's inequality.
\end{proof}

Let us state Lemma 4.1 from \cite[Chapter 2]{chen1998second} that was used in the proof of Lemma \ref{L.6.1}.

\begin{Lemma}
Let $\varphi(r)$ be a bounded nonnegative function defined on the interval $[R_0,R_1]$, where $R_1>R_0\geq 0$. Suppose that for any $R_0\leq \eta<R\leq R_1$, $\varphi$ satisfies
\[
\varphi(\eta)\leq \theta\varphi(R)+\frac{A}{(R-\eta)^\alpha}+B
\]
where $\theta,A,B$, and $\alpha$ are nonnegative constants, and $\theta<1$. Then
\[
\varphi(\eta)\leq C\left[\frac{A}{(R-\eta)^\alpha}+B\right],\quad\text{ for all }R_0\leq \eta<R\leq R_1,
\]
where $C$ depends only on $\alpha,\theta$.
\end{Lemma}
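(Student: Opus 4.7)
The plan is to run a standard iteration argument: construct a geometric sequence of radii interpolating between $\eta$ and $R$, apply the hypothesis at each scale, and sum the resulting geometric series, using the boundedness of $\varphi$ only to kill a residual $\theta^k \varphi(\cdot)$ term at the end. Assume $\alpha>0$, since $\alpha=0$ follows at once by iterating the pure inequality $\varphi(\eta)\leq \theta\varphi(R)+A+B$.

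Fix $R_0\leq \eta<R\leq R_1$ and a parameter $\tau\in(0,1)$ to be chosen. Define
\[
r_0:=\eta,\qquad r_{i+1}:=r_i+(1-\tau)\tau^i(R-\eta),\qquad i\geq 0,
\]
so that $r_i=\eta+(1-\tau^i)(R-\eta)\uparrow R$ and $r_i\in[R_0,R_1]$ for all $i$. Applying the hypothesis with $(\eta,R)$ replaced by $(r_i,r_{i+1})$ gives
\[
\varphi(r_i)\leq \theta\,\varphi(r_{i+1})+\frac{A}{(1-\tau)^\alpha\tau^{i\alpha}(R-\eta)^\alpha}+B.
\]
Iterating this inequality $k$ times yields
\[
\varphi(r_0)\leq \theta^k\varphi(r_k)+\frac{A}{(1-\tau)^\alpha(R-\eta)^\alpha}\sum_{i=0}^{k-1}\Bigl(\frac{\theta}{\tau^\alpha}\Bigr)^{\!i}+B\sum_{i=0}^{k-1}\theta^i.
\]

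Now choose $\tau\in(0,1)$ so that $\tau^\alpha>\theta$; for instance $\tau^\alpha=\tfrac{1+\theta}{2}$. Then both geometric sums are bounded uniformly in $k$ by constants $C=C(\alpha,\theta)$. Since $\varphi$ is bounded on $[R_0,R_1]$ and $\theta<1$, we have $\theta^k\varphi(r_k)\to 0$ as $k\to\infty$. Passing to the limit yields
\[
\varphi(\eta)\leq C(\alpha,\theta)\left[\frac{A}{(R-\eta)^\alpha}+B\right],
\]
which is the desired bound. The key observation is that the boundedness hypothesis on $\varphi$ is used only to discard the term $\theta^k\varphi(r_k)$; the final constant $C$ depends only on $\alpha$ and $\theta$ and not on the bound for $\varphi$, because that bound has been absorbed into the vanishing limit. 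There is no real obstacle beyond balancing the two scales: $\tau$ must be large enough that $\theta/\tau^\alpha<1$ (so the geometric series in $A$ converges) yet still strictly less than $1$ (so that $r_i\to R$ and each step of the iteration is legitimate). The explicit choice $\tau^\alpha=(1+\theta)/2$ achieves both, making $C(\alpha,\theta)=\bigl(\tfrac{2}{1-\theta}\bigr)\bigl(1-(\tfrac{1+\theta}{2})^{1/\alpha}\bigr)^{-\alpha}$ (for the $A$-term) and $1/(1-\theta)$ (for the $B$-term) serve as an admissible constant.
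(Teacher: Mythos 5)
Your proof is correct. The paper does not supply its own proof of this lemma—it simply transcribes it from \cite{chen1998second} (Chapter 2, Lemma 4.1) for the reader's convenience—but your geometric-dyadic iteration with $r_i=\eta+(1-\tau^i)(R-\eta)$, the choice $\tau^\alpha=(1+\theta)/2$, and the passage to the limit in $\theta^k\varphi(r_k)$ is precisely the standard textbook argument from that reference. (Minor remark: with your choice of $\tau$, the sharper constant for the $A$-term is $(1-\tau)^{-\alpha}\,\tfrac{1+\theta}{1-\theta}$; your stated $\tfrac{2}{1-\theta}(1-\tau)^{-\alpha}$ is a slightly larger but still admissible bound, so nothing breaks.)
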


Now we study PIA. 
We first assume
\beq\lb{H1}
\begin{cases}
\eqref{c.1}, \eqref{c.3},\eqref{c.4},\text{ and }\Sigma(x,u)=\Sigma(x)\geq \I_d/C_0,\text{ which is independent of }u,    \\
\text{the maps $u\to r(x,u)$ and $u\to b(x,u)$ are uniformly Lipschitz continuous.}
\end{cases}
\eeq
Next, suppose that $v^0$ is locally uniformly $\Cc^{1,\alpha}$ for some $\alpha\in(0,1)$, and 
\beq\lb{H2}
\sup_{R\geq 1}R^{-N}\|v^0\|_{\Cc^{1,\alpha}(B_R)}<\infty.
\eeq
And  for some $L\geq 1$,
\beq\lb{H3}
U=[0,1]^L.
\eeq
We comment that \eqref{H3} can be generalized to a uniform cone test condition as \cite[Assumption 4.2]{pia2022}: for any $u\in U$, there must exist a common-sized cone with its vertex at $u$ that is entirely contained within $U$.
This condition is not restrictive, as it is satisfied when $U$ is a convex set or a finite union of convex sets. Moreover, this condition, combined with the Lipschitz condition on $u$, leads to the logarithmic growth estimate for the entropy term in \cite[Corollary 4.2]{pia2022}, which is essential in the proof of Lemma \ref{L.5.1} below.

\medskip

Since $\sigma$ is independent of $u$, we have a simpler formula for $\pi^n$. Indeed, for $n\geq 1$, define 
\beq\lb{5.3}
\pi^n(x)(u):=\Gamma(x, D v^{n-1} (x)) (u)=\frac{\exp\left(  \frac{1}{\lambda} (r(x,u)+ b(x,u) \cdot D v^{n-1}(x)\right)}{\int_U \exp\left(  \frac{1}{\lambda} (r(x,u')+ b(x,u') \cdot D v^{n-1}(x)\right) du'},
\eeq
and $r^n,b^n,\Hc^n$ the same as in \eqref{1.10} with the above $\pi^n$. We then look for $v^n$ from the equation:
\beq\lb{5.1}
	\rho v^n(x)- {r}^n(x) - {b}^n(x) \cdot {D}  v^n(x)-{2}^{-1}\tr({\Sigma}(x) {D} ^2 v^n(x)) +\lambda {\Hc}^n(x)=0.
\eeq
Below we show the well-posedness of \eqref{5.1} and thus the PIA, and we use the uniform ellipticity of the equation to obtain some uniform estimates on $v^n$ for all $n\geq 0$.

\begin{Lemma}\lb{L.5.1}
Under the assumptions of \eqref{H1}--\eqref{H3},
there exists $C>0$ independent of $\rho$ such that for all $n\geq 1$ and any $R>0$, 
\[
\rho \|v^n\|_{L^\infty(B_R)}\leq C(1+R^N) \quad\text{and}\quad
\|Dv^n\|_{\Cc^{\alpha}(B_R)}\leq C(1+|R|^{N+5}).
\]
\end{Lemma}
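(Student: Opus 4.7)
The plan is to prove the two bounds simultaneously by induction on $n$, exploiting the fact that \eqref{5.1} is a \emph{linear} uniformly elliptic equation (since $\Sigma$ is independent of $u$) whose coefficients depend on $v^{n-1}$ only through $Dv^{n-1}$. The inductive bound on $Dv^{n-1}$ feeds into the coefficients of \eqref{5.1}, and the log-growth of the entropy term is what keeps the polynomial exponents stable from one step to the next.

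\textbf{Step 1 (Entropy control).} Since $\sigma$ is independent of $u$, the Gibbs density $\pi^n$ in \eqref{5.3} depends only on $Dv^{n-1}$. Combining the cone condition \eqref{H3} with the uniform Lipschitz regularity in $u$ of $r$ and $b$ from \eqref{H1}, the logarithmic entropy estimate of \cite[Corollary 4.2]{pia2022} yields
\[
0 \leq \Hc^n(x) \leq C\big(1 + \log(1+|x|^N) + \log(1+|Dv^{n-1}(x)|)\big).
\]
Under the inductive bound $\|Dv^{n-1}\|_{L^\infty(B_R)} \leq C(1+R^{N+5})$, this reduces to $|\Hc^n(x)| \leq C(1+\log(1+|x|))$, which is negligible compared to any positive-degree polynomial in $|x|$.

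\textbf{Step 2 ($L^\infty$ bound via barrier).} Since $\pi^n$ is a probability density, $|r^n(x)| \leq A_1(1+|x|^N)$ and $|b^n(x)| \leq A_2(1+|x|)$, while $\Sigma$ continues to satisfy \eqref{c.3}. Mirroring the barrier construction in Proposition \ref{P.4.1}, I would set $\bar v(x) := A_0(1+|x|^2)^{N/2}$. Because the entropy contribution from Step 1 is subpolynomial, a direct computation using \eqref{c.4} shows that for $A_0 = C/\rho$ with $C$ independent of $\rho$, $\pm \bar v$ are super/subsolutions of the linear equation \eqref{5.1}. The comparison principle (Lemma \ref{L.cp} applied to the linear equation) then yields $\rho\|v^n\|_{L^\infty(B_R)} \leq C(1+R^N)$ with $C$ independent of $\rho$ and $n$.

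\textbf{Step 3 (Local $\Cc^{1,\alpha}$ estimate).} For fixed $x_0$ with $|x_0|\leq R$, I would apply Lemma \ref{L.6.1} to \eqref{5.1} on $B_2(x_0)$. The effective coefficient sizes are
\[
\tilde A_1 := \|r^n-\lambda \Hc^n\|_{L^\infty(B_2(x_0))} \leq C(1+R^N),\quad \tilde A_2 \leq C(1+R),\quad \tilde A_3 \leq C(1+R),
\]
and $\|v^n\|_{L^\infty(B_2(x_0))} \leq C(1+R^N)/\rho$ by Step 2. Choosing $p>d$ large enough so that both $1-d/p \geq \alpha$ and $2d/p \leq 1$, the conclusion of Lemma \ref{L.6.1} combined with the cancellation of $\rho$ in the second term gives
\[
\|v^n\|_{W^{2,p}(B_1(x_0))} \leq C\big(1+R^{N+4+2d/p}\big) \leq C(1+R^{N+5}).
\]
The Sobolev embedding $W^{2,p}(B_1) \hookrightarrow \Cc^{1,\alpha}(B_1)$ then yields $\|Dv^n\|_{\Cc^\alpha(B_1(x_0))} \leq C(1+R^{N+5})$ uniformly in $x_0\in B_R$, and covering $B_R$ by the $B_1(x_0)$ closes the induction. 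The base case $n=0$ follows directly from \eqref{H2}.

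The main obstacle is closing the induction: feedback through $\Hc^n$ could in principle inflate the polynomial growth of $Dv^n$ at each iteration, and a naive supremum bound on $\Hc^n$ would be polynomial rather than logarithmic. The logarithmic bound from \cite[Corollary 4.2]{pia2022}, which relies crucially on the cone condition \eqref{H3}, is precisely what prevents this blow-up and keeps the exponent at $N+5$ uniformly in $n$.
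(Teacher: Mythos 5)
Your proposal follows essentially the same route as the paper: induct on $n$ with a $\Cc^{1,\alpha}$ bound on $Dv^{n-1}$, use \cite[Corollary 4.2]{pia2022} via the cone condition \eqref{H3} to control $\Hc^n$ logarithmically, invoke the barrier/comparison machinery of Proposition \ref{P.4.1} for the $L^\infty$ bound with the correct $\rho^{-1}$ factor, and then apply Lemma \ref{L.6.1} plus Sobolev embedding to recover the $\Cc^{1,\alpha}$ estimate and close the induction. One point you leave implicit is the mechanism that actually closes the induction on the \emph{constant}, not just the exponent: because $\Hc^n$ depends on $Dv^{n-1}$ through a logarithm, the constant produced at step $n$ scales like $C\,(1+\ln M)$ where $M$ is the inductive constant, and one must choose $M$ a priori so that $C(A_1 + \ln M)(A_2 + A_3^4) \leq M$ — this self-consistency condition is what the paper records explicitly (its condition $M \geq \tilde C(1+\ln M)$ and the choice of $M$ in \eqref{6.15}); absorbing everything into an unnamed $C$ obscures why the constant stabilizes as $n\to\infty$. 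With that detail made precise your argument matches the paper's.
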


\begin{proof}

Let $M\geq 2$ be a large constant, so that it satisfies for some $\tilde C\geq 1$ to be determined (independent of $n,\rho$),
\[
%M\geq \max\left\{4C_1N, \sup_{R\geq 1}R^{-N}\|v^0\|_{\Cc^{1,\alpha}(B_R)}\right\}\quad\text{and}\quad M\geq 2C_1\ln M.
\sup_{R\geq 1}R^{-N}\|v^0\|_{\Cc^{1,\alpha}(B_R)}\leq M\quad\text{and}\quad M\geq \tilde C(1+\ln M).
\]

Assume for induction that  for some $n\geq 1$, $v^{n-1}$ exists, and for all $x_0\in \R^d$ we have 
\beq\lb{5.5}
\|{D}  v^{n-1}\|_{\Cc^\alpha(B_1(x_0))}\leq M(1+|x_0|^{N+5}).
\eeq
Recall \eqref{1.10} and \eqref{5.3}. We get from \eqref{c.3} that
\beq\lb{5.6}
|r^n(x)|\leq \sup_{u\in U} |r(x,u)|\leq A_1(1+|x|^N)\quad\text{and}\quad |b^n(x)|\leq \sup_{u\in U} |b(x,u)|\leq A_2(1+|x|).
\eeq
Since $U=[0,1]^L$, the cone test condition in \cite[Corollary 4.2]{pia2022} is satisfied. Thus, the corollary yields that there exists $C$ depending on $L,\lambda,N$ and other constants in the assumptions such that
\bee
\begin{aligned}
|\lambda\Hc^n(x)|&=\left|\lambda\int_U\ln(\Gamma(x,Dv^{n-1}(x)))\Gamma(x,Dv^{n-1}(x))\,du\right|\\
&\leq C(1+\ln(1+|{D} v^{n-1}(x)|))\leq C(\ln M+\ln (1+|x|)) 
\end{aligned}
\eee
where, in the second inequality, we applied the induction hypothesis \eqref{5.5}.
Thus, 
\beq\lb{5.7}
|r^n(x)-\lambda\Hc^n(x)|\leq (A_1+2C\ln M)(1+|x|^N).
\eeq
Then it follows from \eqref{c.4} and Proposition \ref{P.4.1} (with $r^n(x)-\lambda\Hc^n(x), b^n(x)$ in place of $r(x,u), b(x,u)$) that
there exists a unique solution $v^n$ to \eqref{5.1} such that
\beq\lb{5.2}
\rho|v^n(x)|\leq C_1(A_1+\ln M) \left( 1+|x|^N \right)  \quad \text{ for all } x\in \R^d,
\eeq
where $C_1$ depends only on $L, \lambda ,N$, and the assumptions. 
By applying Lemma \ref{L.6.1} to $v^n$ with an arbitrary $p>d$, we have for any point $x_0\in\R^d$ that,
\begin{align*}
\| v^n\|_{W^{2,p}(B_2(x_0))} & \leq  C 
%\Big[   \| r^n\|_{L^\infty(B_4(x_0))} +\|\lambda {\Hc^n}  \|_{L^\infty(B_4(x_0))} +\\
\Big[ \| r^n-\lambda {\Hc^n}  \|_{L^\infty(B_4(x_0))} +\\
&\quad \left(\rho+ \|b^n\|^2_{L^\infty(B_4(x_0))}+\|\sigma\|^{4+\frac{2d}p}_{L^\infty(B_4(x_0))} \right)\|v^n\|_{L^\infty(B_4(x_0))}   \Big],
\end{align*}
%where $C$ depends only on $C_0, p$ and $d$.
Recall $\rho\geq 4(N+1)(A_2+NA_3)$ from \eqref{c.4}. We apply \eqref{c.3}, \eqref{5.6}, \eqref{5.7}, and \eqref{5.2} to get
\be\label{eq3} 
\begin{aligned}
	\| v^n\|_{W^{2,p}(B_1(x_0))}\leq C\left(A_1+\ln M \right)\left( A_2+A_3^{3+\frac{2d}{p}}\right)\left(1+|x_0|^{N+4+\frac{2d}{p}}\right)%+ \ln\left(1+ \|{D}  v^{n-1}\|_{\Cc^0(B_2(x_0))} \right)\right]
\end{aligned}
\ee 
with $C$ depending only on $C_0, p, d$ (independent of $\rho\geq 1$).
%where in the second inequality, we applied \eqref{5.6}, \eqref{5.7} and \eqref{5.2}.

Now, taking $p=\max\{\frac{d}{1-\alpha},2d\}$ and using \eqref{eq3}, we apply the Sobolev embedding (e.g., \cite[Section 5.6.2, Theorem 5]{Evans-book-98}) to get
\be\label{eq:lm6.1.deriv} 
\|v^n\|_{\Cc^{1,\alpha}(B_1(x_0))}\leq C\| v^n\|_{W^{2,p} (B_2(x_0))}\leq C \left(A_1+\ln M \right)\left( A_2+A_3^{4}\right)\left(1+|x_0|^{N+5}\right),
\ee 
where $C$ is independent of $x_0,\rho$, and $n\geq 1$. 
Hence if $M$ is sufficiently large such that
\beq\lb{6.15}
C \left(A_1+\ln M \right)\left( A_2+A_3^{4}\right)\leq M,
\eeq
then
\[
\|{D}  v^n\|_{\Cc^\alpha(B_1(x_0))}\leq M(1+|x_0|^{N+5}).
\]
By induction, we finish the proof of \eqref{5.5}. 

Finally, \eqref{5.2} finishes the proof of the lemma.
\end{proof}

We note that under the same conditions as Lemma \ref{L.5.1}, the controlled SDE for each iteration step $n$, given by
$
dX^{\pi^n}_t=b^n(X^{\pi^n}_t)\,dt+\sigma(X^{\pi^n}_t)\,dW_t,$
admits a unique strong solution. This follows from the fact that $b^n(x)$ and $\sigma(x)$ are both locally Lipschitz and exhibit linear growth in $x$.

\begin{Corollary}\lb{C.5.1}
Under the assumptions of Lemma \ref{L.5.1}, then we have
\[
v^{n-1}\leq v^{n}\quad \text{ for all } n\geq 1.
\]
\end{Corollary}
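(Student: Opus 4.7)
The plan is to use the standard policy-improvement argument at the PDE level, together with a comparison principle for the resulting linear equation. By \eqref{5.3}, for each $n\geq 1$ the density $\pi^n(x,\cdot)$ is the pointwise maximizer over $\Pc(U)$ of $\pi\mapsto F_\pi(x,Dv^{n-1}(x),D^2v^{n-1}(x))$. Testing this maximality with the previous iterate's policy $\pi^{n-1}$ and invoking the fact that $v^{n-1}$ satisfies \eqref{5.1} at step $n-1$ gives
\[
\rho v^{n-1}(x)\;=\;F_{\pi^{n-1}}(x,Dv^{n-1},D^2v^{n-1})\;\leq\;F_{\pi^n}(x,Dv^{n-1},D^2v^{n-1}).
\]
Since $\sigma$ is independent of $u$, the right-hand side equals $r^n + b^n\cdot Dv^{n-1} + \tfrac{1}{2}\tr(\Sigma D^2v^{n-1}) - \lambda\Hc^n$, so $v^{n-1}$ is a classical subsolution of the linear equation \eqref{5.1} whose unique polynomially-growing solution is $v^n$.

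From here I would conclude $v^{n-1}\leq v^n$ by invoking a comparison principle for \eqref{5.1}. The coefficients of the linearized operator satisfy the structural assumptions \eqref{c.1}--\eqref{c.3} uniformly in $n$: the bounds on $|r^n|$ and $|b^n|$ follow by integrating \eqref{c.3} against the probability density $\pi^n$, $\Sigma$ is unchanged, and $|\lambda\Hc^n|$ has only polynomial growth thanks to the logarithmic entropy estimate of \cite[Corollary 4.2]{pia2022} combined with the gradient bound of Lemma \ref{L.5.1}. Moreover, Lemma \ref{L.5.1} gives $\rho|v^{n-1}|,\rho|v^n|\leq C(1+|x|^N)$, so the decay hypothesis \eqref{4.9} for $v^{n-1}-v^n$ is automatic. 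I would then replay the argument of Lemma \ref{L.cp}: the perturbation $\mu_\eps(x):=v^{n-1}(x)-\eps(1+|x|^{N+1})$ remains a subsolution of \eqref{5.1} thanks to the largeness of $\rho$ in \eqref{c.4}, and the interior comparison of Lemma \ref{T.1.2} on each ball $B_{R_\eps}$ followed by $\eps\downarrow 0$ completes the proof.

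The main technical obstacle is not the PDE step itself, which is routine once the right subsolution is identified, but the bookkeeping needed to apply Lemma \ref{L.cp} uniformly across the whole sequence -- in particular, ruling out a blow-up of the entropy term $\lambda\Hc^n$ so that it fits within the polynomial-growth class required by \eqref{c.3}. The base case $n=1$ requires, in addition, that $v^0$ itself be a (classical) subsolution of the HJB \eqref{eq:HJB}, which is to be read as part of the \emph{suitable} choice of initialization stipulated in Algorithm \ref{alg:PIA}.
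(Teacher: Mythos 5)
Your proof takes essentially the same route as the paper: show that $v^{n-1}$ is a subsolution of the linear equation \eqref{5.1} (by the maximality of $\pi^n$ over $\Pc(U)$ together with the equation satisfied by $v^{n-1}$ at the previous step), then apply the comparison principle of Lemma~\ref{L.cp} to \eqref{5.1} with $r^n-\lambda\Hc^n$, $b^n$, $\Sigma$ in the coefficient slots, checking the growth hypotheses via Lemma~\ref{L.5.1} and the entropy bound of \cite[Corollary 4.2]{pia2022}. Your remark about the base case $n=1$ is correct and is glossed over in the paper: for $n\geq 2$ the subsolution property follows from the PDE for $v^{n-1}$, but for $n=1$ one must either take $v^0$ to be a subsolution of \eqref{4.0} (a ``suitable'' initialization) or simply start the monotone chain at $n=2$, which is all that Theorem~\ref{T.5.1} actually requires.
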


\begin{proof}
By Proposition \ref{P.4.1} and Lemma \ref{L.5.1}
\[
 \rho \|v^n\|_{L^\infty(B_R)}\leq C(1+R^N) \quad \text{ for all } R>0,
\]
for some constant $C$ independent of $n$ and $\rho$, and $v^n$ is a subsolution for \eqref{4.0}. Notice that $v^{n-1}$ being a subsolution to  \eqref{5.1} for any $n\geq 1$.  Thus, with $r^n(x)-\lambda \Hc^n(x), b^n(x)$ in place of $r(x,u), b(x,u)$ in \eqref{4.1}, we apply Lemma \ref{L.cp} to \eqref{5.1} to get that $v^n\geq v^{n-1}$ for any $n\geq 1$.
\end{proof}

The next goal is to obtain the convergence of $v^n$ as $n\to\infty$. 

\begin{Theorem}\lb{T.5.1}
Under the assumptions \eqref{H1}--\eqref{H3}, let $v^*$ solve \eqref{2.1} with $\sigma$ independent of $u$, and let $v^n$ from PIA. Then $v^n\to v$ as $n\to\infty$ locally uniformly in $\Cc^{1,\alpha}$ over $\R^d$.
\end{Theorem}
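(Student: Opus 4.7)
The plan mirrors the proof of Theorem \ref{thm:bound.converge}, with the simplification that since $\sigma$ is independent of $u$, the policy $\Gamma(x,\cdot)$ depends only on $Dv$, so the locally uniform $\Cc^{1,\alpha}$ bound from Lemma \ref{L.5.1} is already enough to pass to the limit in $\pi^n$. First, Corollary \ref{C.5.1} gives the monotonicity $v^{n-1}\le v^n$, and since $\pi^{n+1}$ maximizes $F_\pi(x,Dv^n,D^2v^n)$,
\[
\rho v^n - F(x,Dv^n,D^2v^n)=\rho v^n - F_{\pi^{n+1}}(x,Dv^n,D^2v^n)\le 0,
\]
so $v^n$ is a viscosity subsolution to \eqref{4.0}. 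The polynomial growth of order $N$ provided by Lemma \ref{L.5.1} and Proposition \ref{P.4.1} places both $v^n$ and $v^*$ in the scope of the comparison principle Lemma \ref{L.cp} (condition \eqref{4.9} is satisfied), hence $v^n\le v^*$. Consequently $\bar v:=\lim_{n\to\infty}v^n$ exists pointwise with $\bar v\le v^*$.

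\textbf{Upgrading convergence and passing to the limit.} The uniform estimate $\|Dv^n\|_{\Cc^\alpha(B_R)}\le C(1+R^{N+5})$ from Lemma \ref{L.5.1}, combined with Arzel\`a--Ascoli, shows that every subsequence of $\{v^n\}$ admits a further subsequence converging in $\Cc^{1,\beta}(B_R)$ for any $\beta<\alpha$; pointwise monotone convergence forces the limit to be $\bar v$, so $v^n\to\bar v$ in $\Cc^{1,\beta}_{\mathrm{loc}}(\R^d)$ for every $\beta<\alpha$ and $\bar v\in\Cc^{1,\alpha}_{\mathrm{loc}}(\R^d)$. By \eqref{5.3}, $\pi^n(x,u)=\Gamma(x,Dv^{n-1}(x))(u)$ involves only $Dv^{n-1}$, so $\pi^n\to \Gamma(x,D\bar v(x))(u)$ locally uniformly in $(x,u)$; consequently $r^n\to r^\infty$, $b^n\to b^\infty$, and $\lambda\Hc^n\to\lambda\Hc^\infty$ locally uniformly, where $r^\infty,b^\infty,\Hc^\infty$ are obtained from $\pi^\infty:=\Gamma(x,D\bar v)$. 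Standard stability of viscosity solutions under locally uniform convergence of coefficients and solutions then shows that $\bar v$ is a viscosity solution of
\[
\rho \bar v - r^\infty - b^\infty \cdot D\bar v - \tfrac12\tr(\Sigma\, D^2\bar v) + \lambda\Hc^\infty = 0,
\]
which, by the variational principle underlying \eqref{0.1}, is exactly the HJB equation \eqref{4.0}.

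\textbf{Conclusion and main obstacle.} Both $\bar v$ and $v^*$ are viscosity solutions of \eqref{4.0} of polynomial growth of degree $N$, so condition \eqref{4.9} holds and Lemma \ref{L.cp} yields $\bar v=v^*$, completing the proof. The principal obstacle is ensuring that the entropy term $\Hc^n$ passes to the limit in the locally uniform topology. This requires a uniform-in-$n$ strictly positive lower bound $\pi^n(x,u)\ge c(R)>0$ on each ball $B_R$, which follows from the locally uniform polynomial bound on $Dv^{n-1}$ in Lemma \ref{L.5.1} and the explicit exponential formula \eqref{5.3} (in the spirit of \eqref{1.3}); once this non-degeneracy of $\pi^n$ is secured, $\ln\pi^n\to\ln\pi^\infty$ locally uniformly and the rest of the argument is a direct invocation of viscosity stability together with Lemma \ref{L.cp}.
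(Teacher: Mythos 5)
Your proposal is correct and follows essentially the same approach as the paper: monotonicity via Corollary \ref{C.5.1} and the comparison principle, the uniform local $\Cc^{1,\alpha}$ bound from Lemma \ref{L.5.1} to upgrade to locally uniform $\Cc^1$ convergence, passage to the limit in $\pi^n$ and the coefficients, viscosity stability, and identification $\bar v = v^*$ via Lemma \ref{L.cp}. You spell out a few steps the paper leaves implicit (the Arzel\`a--Ascoli extraction forcing the full sequence to converge, and the uniform positive lower bound on $\pi^n$ on compact sets needed to pass $\Hc^n$ to the limit), which are useful clarifications rather than a different route; to reach the stated $\Cc^{1,\alpha}$ rather than $\Cc^{1,\beta}$ for $\beta<\alpha$, one simply notes that Lemma \ref{L.5.1} in fact gives a $\Cc^{1,\alpha'}$ bound for any $\alpha'<1$ via the $W^{2,p}$ estimate with $p$ large.
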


\begin{proof}
By the uniform local bound of $\{v^n\}_n$ in Lemma \ref{L.cp}, Corollary \ref{C.5.1}, and the Monotone Convergence Theorem, $v^n$ converges locally uniformly to some function, denoted by $\bar{v}$.

By Lemma \ref{L.5.1}, $v^n$ and $\bar{v}$ are locally uniformly bounded in $\Cc^{1,\alpha}$. 
Therefore, we actually have that $v^n\to \bar{v}$ locally uniformly  in $\Cc^{1,\alpha}$. 
This implies that $\pi^n(x,u)\to \Gamma(x,D\bar v)(u)$ locally uniformly as $n\to\infty$.
By the definition of $r^n,b^n$, and the stability of viscosity solutions (under locally uniform convergence), we get that $\bar{v}$ is a viscosity 
to
\[
\rho v- \int_U \left[r(x,u)+ b(x,u) \cdot Dv+ \frac{1}{2}\tr(\sigma\sigma^T(x) D^2 v) -\lambda \ln(\Gamma(x,Dv)(u))\right] \Gamma(x,Dv)(u)\,du =0.
\]
The definition of $\Gamma$ then yields that $\bar{v}$ is a viscosity solution to \eqref{2.1}. Thus, by the comparison principle, $\bar{v}=v^*$, which finishes the proof.
\end{proof}

\bibliographystyle{abbrv}

\section*{Statements and Declarations}
H. Tran is partially supported by NSF CAREER grant DMS-1843320, a Simons Fellowship, and a Vilas Faculty Early-Career Investigator Award. Y. P. Zhang is partially supported by Simons Foundation Travel Support MPS-TSM-00007305, and a start-up grant at Auburn University.

\end{document}